\newtheorem{thm}{Theorem}[section]
\newtheorem{prop}[thm]{Proposition}
\newtheorem{cor}[thm]{Corollary}
\newtheorem{lem}[thm]{Lemma}
\theoremstyle{definition}
\newtheorem{defn}[thm]{Definition}
\theoremstyle{remark}
\newtheorem{remark}[thm]{Remark}
\def\be{\begin{eqnarray}}
\def\ee{\end{eqnarray}}
\def\ben{\begin{eqnarray*}}
\def\een{\end{eqnarray*}}
\numberwithin{equation}{section}
\newcommand{\zpz}{{\mathbb{Z}}\big/{p\mathbb{Z}}}
\newcommand{\half}{\frac{1}{2}}
\newcommand{\calL}{\mathcal{L}}
\newcommand{\eps}{\varepsilon}
\begin{document}

\title{Entropy Inequalities for Sums in Prime Cyclic Groups\footnote{Some portions of this paper were presented by the authors at the IEEE International Symposia on Information Theory 
in 2014 and 2015 \cite{WWM14:isit, WM15:isit}.}}

\author{
Mokshay Madiman\thanks{Department of Mathematical Sciences, University of Delaware, Newark DE 19716. Email: madiman@udel.edu}, 
Liyao Wang\thanks{J. P. Morgan Chase, New York. Email: njuwangliyao@gmail.com} 
and Jae Oh Woo\thanks{Department of Mathematics, and Department of Electrical and Computer Engineering, The University of Texas at Austin. Email: jaeoh.woo@aya.yale.edu}}

%
%





\date{\today}

\maketitle
\begin{abstract}
Lower bounds for the R\'enyi entropies of sums of independent random variables taking values in cyclic groups of prime order under permutations are established. The main ingredients of our approach are extended rearrangement inequalities in prime cyclic groups building on Lev (2001), and notions of stochastic ordering. Several applications are developed, including to discrete entropy power inequalities, the Littlewood-Offord problem, and counting solutions of certain linear systems.
\end{abstract}




\section{Introduction}

Let $G$ be a finite abelian group with the operation $+$. Specifically, we are interested in the additive prime 
cyclic group $\zpz $ with $a + b = a + b \, (\text{mod}\, p)$ for $a,b\in\zpz$ where $p$ is a prime number with $p\geq 3$. 
For any finite non-empty subsets $A,B\in \zpz$, we define the sumset
\begin{align*}
	A+ B= \left\{ a+ b : a\in A, b\in B \right\}.
\end{align*}
For any random variables $X$ and $Y$ over $\zpz$, we define the sum of two random variables as usual, so that if
the congruence class $X$ were represented by an integer $\tilde{X}$ in it, and $Y$ were represented by $\tilde{Y}$,
then $X+ Y$ is the congruence class represented by $\tilde{X}+\tilde{Y}$ (mod $p$).
In fact, for ease of notation, henceforth we will work with a particular set of representatives for the congruence classes.
Specifically, we will center $\zpz$ at $0$ by thinking of it as the set
\begin{align*}
	\zpz= \bigg\{ -\frac{p-1}{2},\cdots, 0, \cdots, \frac{p-1}{2} \bigg\}.
\end{align*}

Let $f$ be the probability mass function of a random variable $X$ taking values in $\zpz$. 
We denote by $H(X)$ the Shannon entropy of $X$ (or $f$) defined by
\begin{align*}
H(X) 
= H(f) :=  -\sum_{ \mathbb{Z}/p\mathbb{Z}} f(i)\log f(i).
\end{align*} 
It is standard to abuse notation by writing $H(X)$, though the entropy only depends on the probability mass function $f$.

We are interested in the behavior of the entropy of sums in prime cyclic groups, and especially in lower bounds. 
One can easily prove an upper bound on the entropy:
\begin{align}\label{eqn:upper}
H(X+ Y) \leq \min\{ H(X)+H(Y), \log p \}.
\end{align}
In a general setting, the upper bound is well known by data processing inequality leveraging Jensen's inequality. For more details, see, e.g., \cite{Tao10, ALM17, Cover2006}. Given a finite domain, 
the uniform distribution is the entropy maximizer which implies that the upper bound \eqref{eqn:upper} can be tight. 
The more interesting goal, which is the focus of this paper, is to develop {\it sharp  lower bounds for the entropy of the sum under permutations}. 

\par\vspace{.1in}
\noindent{\bf Motivations.}
A fundamental sumset inequality is the Cauchy-Davenport inequality, which asserts that for subsets $A, B$ of $\zpz$, 
\begin{align*}
|A+ B| \geq \min \{|A|+|B|-1, p\}.
\end{align*}
Arithmetic progressions play a special role in the description of extremizers.
In the integer lattice, a similar sumset inequality, which may be thought of as a discrete analogue of the Brunn-Minkowski inequality, 
is proved by 
\cite{GG01}. Such inequalities, together with their stability versions such as Freiman's theorem,
are foundational results of additive combinatorics \cite{TV06:book}.

One motivation for studying lower bounds on entropy of convolutions (or sums of independent random variables)
is to develop an additive combinatorics for probability distributions on groups, or viewed slightly differently to develop an 
embedding of classical additive combinatorics described using the category of sets into a potentially more flexible
theory described using the richer category of probability measures.
Arising from this motivation, entropic analogues of the Pl\"unnecke-Ruzsa-Freiman sumset theory for abelian groups 
have been explored by \cite{Ruz09:1, Tao10, MMT10:itw, MMT12} for the discrete case,
and by \cite{KM14, MS17, MK18} for the locally compact case.
In particular, when $G$ is a torsion-free abelian group, Tao \cite{Tao10} proved that if $X$ and $X'$ are independent and identically distributed
discrete random variables taking values in $G$, then
\begin{align}\label{eqn:tao}
H(X+ X') - H(X) \geq \frac{1}{2}\log 2 - o(1) ,
\end{align}
as $H(X)\to\infty$. 
This is the natural entropy analogue of an implication of the Cauchy-Davenport inequality for the integers, namely that 
$|A+ A|\geq 2|A| (1-o(1))$ as $|A|\to\infty$, given that entropies behave like logarithms of cardinalities (when we identify 
the uniform measure on a set with the set itself).

A second motivation comes from the fact that the ``entropy power inequality'', which is a sharp lower bound for the entropy of 
a convolution of probability densities on a Euclidean space, is extremely useful in a number of fields, including
Information Theory, Probability, and Convex Geometry (see, e.g., the survey \cite{MMX17:0}), and so it is natural to look for similar phenomena in other groups
that might also be of fundamental importance and have significant applicability. There is strong reason to believe
that sharp lower bounds on entropies of sums in discrete groups would have direct applications to communication and control problems;
indeed, preliminary versions of the results of this paper were already used by Chakravorty and Mahajan \cite{CM17} as a key tool in solving a problem from this area.
Motivated by another communication problem involving polar codes, 
Haghighatshoar et al. \cite{HAT14} obtained a nonasymptotic lower bound on the entropy of sums of integer-valued
random variables; this, however, does not match the sharp asymptotic \eqref{eqn:tao}.
Other partial results for integer-valued random variables include 
\cite{HV03, SDM11:isit, JY10}, as well as our companion paper  \cite{MWW17:2} based on Sperner Theory; 
some results are also available for other discrete groups such as the so-called Mrs. Gerber's lemma \cite{WZ73, SW90a, JA14}
in connection with groups of order that is a power of 2. 

A third motivation comes from the literature on small ball probability inequalities of Littlewood-Offord-type.
An advantage of the general formulation of 
our results in terms of R\'enyi entropies is that
it has a pleasant unifying character, containing not just the Cauchy-Davenport inequality, but also
a generalization of the result of Erd\H{o}s \cite{Erd45},
which in turn sharpened a lemma of Littlewood and Offord \cite{LO43},
on the ``anticoncentration'' of linear combinations of Bernoulli random variables.


A fourth motivation comes from the fact that the classical entropy power inequality is closely tied to
the central limit theorem, and discrete entropy power inequalities are expected to have similar close
connections to developing a better understanding of probabilistic limit theorems in discrete settings. 
For example, there is literature on understanding the behavior of entropy and relative entropy
for limit theorems involving Poisson \cite{KHJ05, Yu09:1, HJK10} and compound Poisson \cite{JKM13, BJKM10} limits
on the integers, as well as for those involving the uniform distribution on compact groups \cite{JS00}.

\par\vspace{.1in}
\noindent{\bf Our approach.} 
As mentioned above, the prototype for an entropy lower bound is the entropy power inequality on a Euclidean space,
which has several equivalent formulations. One type of formulation involves bounding from below the entropy of 
the random variable $X+Y$ by the entropies of $X$ and $Y$ respectively-- observe that \eqref{eqn:tao} is a step towards
a similar inequality on the integers. The other type of formulation involves bounding from below the entropy of $X+Y$
by the entropy of $X'+Y'$, where $X'$ and $Y'$ are ``nicely structured'' random variables that have the same entropies
as $X$ and $Y$ respectively. In the Euclidean setting, ``nicely structured'' can mean either Gaussian densities,
or densities that are spherically symmetric decreasing rearrangements of the original densities \cite{WM14}.
Our approach in this paper is to develop inequalities of the second type for the integers and prime cyclic groups.

For a random variable $X$ in $\zpz$, we consider a permutation map $\sigma:\zpz\to\zpz$ acting on $X$ which preserves each corresponding probability mass of $X$, i.e. $\mathbf{P}_X\left( \sigma^{-1}(A) \right)=\mathbf{P}_X(A)$ for each $A\in 2^{\zpz}$. Then we write $\sigma(X)$, a permuted random variable of $X$ permuted by $\sigma$. 
It is easy to see that any permutation maps applied to a random variable preserve the entropy, i.e. $H(X)=H(\sigma(X))$.

In building entropy inequality of sums under permutations, we would like to identify a minimizing permutation pair $(\sigma_1, \sigma_2)$ such that 
\begin{align*}
    H\left( X+ Y \right) \geq H\left( \sigma_1(X) + \sigma_2(Y) \right),
\end{align*}
where for any $(\sigma_1',\sigma_2')$, $H\left( X+ Y \right)\geq H\left( \sigma_1'(X) + \sigma_2'(Y) \right) \geq H\left( \sigma_1(X) + \sigma_2(Y) \right)$. Then we would like to generalize the entropy inequality for any number of sums.

This is a combinatorial question, but what makes the question somewhat challenging is that the entropy of the sum captures both the algebraic structural similarity of supports of the two random variables and the similarity of the shape of the two distributions. We have to control both similarities to minimize the entropy of the sum. Our main result stated in Theorem \ref{thm:main} below does not tell us the explicit form of the permutation pair $(\sigma_1, \sigma_2)$ in general, but one can identify specific ways of $\sigma_1$ and $\sigma_2$ under some shape regularity conditions. Nevertheless, our main result is the first non-trivial entropy lower-bound of the sum in the prime cyclic group, and provides a constructive solution.

Given that measure permutation maps which can be easily seen to be rearrangements of $\zpz$,
notions of rearrangement are key for our study. The main ingredients of our approach are rearrangement inequalities 
in prime cyclic groups and function ordering in the sense of majorization. Such inequalities were first studied by
 Hardy, Littlewood and P\'olya \cite{HLP88:book} on the integers, and extended by Lev \cite{Lev01} to the prime cyclic groups. 
 Based on the foundation these works provide, we will exploit several notions of rearrangements below.

\par\vspace{.1in}
\noindent{\bf Organization of this note.} 
We start with basic definitions of various notions of rearrangement as well as entropy in Section~\ref{sec:prelim}.
Our main results are described in Section~\ref{sec:main} and include a general inequality for R\'enyi entropies
(of arbitrary order $\alpha\in[0,\infty]$) of convolutions. By specializing to specific values of $\alpha$, one
obtains several interesting applications, with the most interesting being to Littlewood-Offord-type phenomena, 
which are discussed in Section \ref{section:applications}. 
A remark on the distributions that appear in the lower bound is made in Section~\ref{sec:canon}.
The subsequent sections contain the proofs of the main results.

\section{Preliminaries}
\label{sec:prelim}

\subsection{Basic rearrangement notions}

Let us consider a probability mass function $f(x):\zpz \to \mathbb{R}_+$ where $\mathbb{R}_+$ is the set of non-negative real-values. 

\begin{defn}
We define $f^+$ as the rearrangement of $f$ such that for any positive $z\in\zpz$,
\begin{align*}
	f^+(0) \geq f^+(+1) \geq f^+(-1) \geq \cdots &\geq f^+(z) \geq f^+(-z) \geq \cdots \\
	&\geq  f^+\left( \frac{p-1}{2} \right) \geq f^+\left( \frac{-p+1}{2} \right).
\end{align*}
Similarly, we define $f^-$ as the rearrangement of $f$ such that for any positive $z\in\zpz$,
\begin{align*}
	f^-(0) \geq f^-(-1) \geq f^-(+1) \geq \cdots &\geq f^-(-z) \geq f^-(+z) \geq \cdots\\
	&\geq  f^-\left( \frac{-p+1}{2} \right) \geq f^-\left( \frac{p-1}{2} \right).
\end{align*}
\end{defn}

By the construction, values of $f^+$ and $f^-$ should have one-to-one correspondence with values of $f$. 
We also note that rearrangements $f^+$ and $f^-$ make the probability mass be almost symmetric and unimodal, 
and $f^+$ and $f^-$ are mirror-images each other with respect to the center at $0\in \zpz$. 
Figure \ref{fig:rearrangement_types} shows an example of rearrangements in $\mathbb{Z}\big/ 17\mathbb{Z}$ for illustration.

{\begin{figure}
	\begin{center}
		\begin{tikzpicture}[scale=0.8]
		\draw (-6,0) -- (0,0);
		\draw[snake=ticks,segment length=0.5cm] (-5.5,0) -- (-0.5,0);
		\node[circle,draw=black,scale=0.3] at (-5.5,1.8) {$$};	
		\node[circle,fill=black,scale=0.3] at (-4.0,2.7) {$$};
		\node[rectangle,draw=black,scale=0.4] at (-1.5,0.3) {$$};
		\node[rectangle,fill=black,scale=0.4] at (-0.5,1.1) {$$};
		
		\draw (-5.5,0) -- (-5.5,1.7);
		\draw (-4.0,0) -- (-4.0,2.6);
		\draw (-1.5,0) -- (-1.5,0.2);
		\draw (-0.5,0) -- (-0.5,1.0);
		
		\node[regular polygon,regular polygon sides=5,fill=black,scale=0.4] at (-5.0,0) {$$};
		\node[regular polygon,regular polygon sides=5,fill=black,scale=0.4] at (-4.5,0) {$$};
		\node[regular polygon,regular polygon sides=5,fill=black,scale=0.4] at (-3.5,0) {$$};
		\node[regular polygon,regular polygon sides=5,fill=black,scale=0.4] at (-3.0,0) {$$};
		\node[regular polygon,regular polygon sides=5,fill=black,scale=0.4] at (-2.5,0) {$$};
		\node[regular polygon,regular polygon sides=5,fill=black,scale=0.4] at (-2.0,0) {$$};
		\node[regular polygon,regular polygon sides=5,fill=black,scale=0.4] at (-1.0,0) {$$};
		
		\node at (-5.5,-0.3) {\small$-7$};
		\node at (-4.0,-0.3) {\small$-4$};
		\node at (-1.5,-0.3) {\small$1$};
		\node at (-0.5,-0.3) {\small$3$};
		\node at (-6.5, 2.5) {{$f$}};
		
		\draw (2,0) -- (8,0);
		\draw[snake=ticks,segment length=0.5cm] (2.5,0) -- (7.5,0);
		\node[rectangle,fill=black,scale=0.4] at (4.5,1.1) {$$};
		\node[circle,fill=black,scale=0.3] at (5.0,2.7) {$$};
		\node[circle,draw=black,scale=0.3] at (5.5,1.8) {$$};
		\node[rectangle,draw=black,scale=0.4] at (6.0,0.3) {$$};
		
		\draw (5.5,0) -- (5.5,1.7);
		\draw (5.0,0) -- (5.0,2.6);
		\draw (6.0,0) -- (6.0,0.2);
		\draw (4.5,0) -- (4.5,1.0);
		
		\node[regular polygon,regular polygon sides=5,fill=black,scale=0.4] at (2.5,0) {$$};
		\node[regular polygon,regular polygon sides=5,fill=black,scale=0.4] at (3.0,0) {$$};
		\node[regular polygon,regular polygon sides=5,fill=black,scale=0.4] at (3.5,0) {$$};
		\node[regular polygon,regular polygon sides=5,fill=black,scale=0.4] at (4.0,0) {$$};
		\node[regular polygon,regular polygon sides=5,fill=black,scale=0.4] at (6.5,0) {$$};
		\node[regular polygon,regular polygon sides=5,fill=black,scale=0.4] at (7.0,0) {$$};
		\node[regular polygon,regular polygon sides=5,fill=black,scale=0.4] at (7.5,0) {$$};
		
		\node at (4.5,-0.3) {\small$-1$};
		\node at (5.0,-0.3) {\small$0$};
		\node at (5.5,-0.3) {\small$1$};
		\node at (6.0,-0.3) {\small$2$};
		\node at (7.5, 2.5) {{$f^{+}$}};
		
		\node at (1,1.5) {$\Rightarrow$};
		
		\draw (2,-6) -- (8,-6);
		\draw[snake=ticks,segment length=0.5cm] (2.5,-6) -- (7.5,-6);
		\node[rectangle,fill=black,scale=0.4] at (5.5,-4.9) {$$};
		\node[circle,fill=black,scale=0.3] at (5.0,-3.3) {$$};
		\node[circle,draw=black,scale=0.3] at (4.5,-4.2) {$$};
		\node[rectangle,draw=black,scale=0.4] at (4.0,-5.7) {$$};
		
		\draw (5.5,-6) -- (5.5,-5.0);
		\draw (5.0,-6) -- (5.0,-3.4);
		\draw (4.5,-6) -- (4.5,-4.3);
		\draw (4.0,-6) -- (4.0,-5.8);
		
		\node[regular polygon,regular polygon sides=5,fill=black,scale=0.4] at (2.5,-6) {$$};
		\node[regular polygon,regular polygon sides=5,fill=black,scale=0.4] at (3.0,-6) {$$};
		\node[regular polygon,regular polygon sides=5,fill=black,scale=0.4] at (3.5,-6) {$$};
		\node[regular polygon,regular polygon sides=5,fill=black,scale=0.4] at (6.0,-6) {$$};
		\node[regular polygon,regular polygon sides=5,fill=black,scale=0.4] at (6.5,-6) {$$};
		\node[regular polygon,regular polygon sides=5,fill=black,scale=0.4] at (7.0,-6) {$$};
		\node[regular polygon,regular polygon sides=5,fill=black,scale=0.4] at (7.5,-6) {$$};
		
		\node at (4.0,-6.3) {\small$-2$};
		\node at (4.5,-6.3) {\small$-1$};
		\node at (5.0,-6.3) {\small$0$};
		\node at (5.5,-6.3) {\small$1$};
		\node at (7.5,-3.5) {{$f^-$}};
		
		\node at (1,-4.5) {$\Rightarrow$};
	\end{tikzpicture}
	\end{center}
	\caption{Types of Rearrangements in $\mathbb{Z}\big/ 17\mathbb{Z}$} \label{fig:rearrangement_types}
\end{figure}
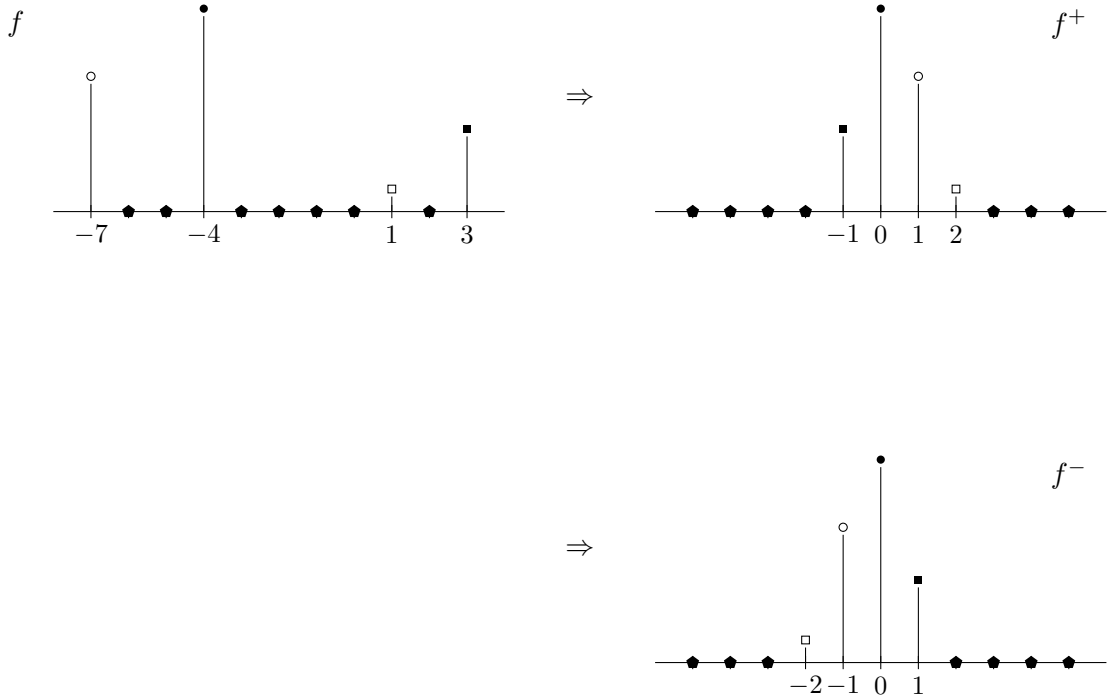}
		
Based on those rearrangements, we consider some regularity assumptions of probability mass functions. 

\begin{defn}
We say that $f$ is {\it $\triangle$-regular} if $f^+(z)=f^-(z)$ for any $z\in\zpz$. In this special case, we sometimes denote $f^+$ or $f^-$ by $f^*$ since they are the same. Equivalently, for any positive $z\in\zpz$,
\begin{align*}
f^*(0) \geq f^*(+1) = f^*(-1) \geq \cdots &\geq f^*(z) = f^*(-z) \geq \cdots\\
&\geq  f^*\left( \frac{p-1}{2} \right) = f^*\left( \frac{-p+1}{2} \right).
\end{align*}
\end{defn}

Clearly, if $f$ is $\triangle$-regular, then $f^*$ is completely symmetric and unimodal. Hence $f$ contains pairs of the same values, except for the largest value that must be taken an odd number of times, or $f$ is a zero function. 
We remark that the definition of $\triangle$-regular is consistent with the 
$*$-symmetrically decreasing definitions of Hardy, Littlewood, P\'olya \cite{HLP88:book} and the balanced function definition of Lev \cite{Lev01}. 
For our purposes, we find it useful to define an additional notion of regularity.

\begin{defn}
We say that $f$ is {\it $\square$-regular} if 1) $f^+(z + 1)=f^-(z)$ for any $z\in\zpz$ and $\left|\text{supp}f \right|$ is even, or 2) $f\equiv 0$. In other words, for any positive $z\in\zpz$ except $(p-1)/2$,
\begin{align*}
f^+(0) = f^+(+1) \geq \cdots &\geq f^+(-z) = f^+(z+1) \geq \cdots\\
&\geq  f^+\left( \frac{-p+3}{2} \right) = f^+\left( \frac{p-1}{2} \right),
\end{align*}
and $f^+\left( \frac{-p+1}{2} \right)=0$.
\end{defn}

If $f$ is $\square$-regular, then $f^+$ is symmetric and unimodal with respect to $+1/2$, and $f^-$ is symmetric and unimodal with respect to $-1/2$. 
Thus $f$ contains only pairs of the same values, or $f$ is a zero function. 

\subsection{Some preliminaries on entropies}

For a general treatment of entropies, we focus on R\'enyi entropies, which generalize the Shannon entropy to a one-parameter family. 

\begin{defn}
The {\it R\'enyi entropy of order $\alpha\in(0,1)\cup(1,+\infty)$}, of a probability mass function $f$ or a random variable $X$ drawn from it, is defined by
\begin{align*}
H_\alpha  (X)= H_\alpha  (f):=\frac{1}{1-\alpha } \log\left( \sum_{i\in \zpz} f(i)^{\alpha } \right).
\end{align*}
For limiting cases of $\alpha$, define
\begin{align*}
H_0(X) &= \log|\text{supp}(f)|,\\
H_1 (X) &= \sum_{i\in \mathbb{Z}/p\mathbb{Z}} -f(i)\log f(i),\\
H_\infty (X) &= -\log \sup_{i\in \mathbb{Z}/p\mathbb{Z}} f(i).
\end{align*}
\end{defn}

As is usual, we often mildly abuse notation by writing $X$ as the argument though all entropies only depend on $f$.
Note that the definitions of $H_0, H_1, H_\infty$ are consistent with taking limits of $H_\alpha(X)$ in the range $\alpha\in(0,1)\cup(1,+\infty)$, 
giving us a natural definition of R\'enyi entropy of order $\alpha \in [0,\infty]$. In particular, $H_1(\cdot)$ is same as the {\it Shannon entropy} $H(\cdot)$. Moreover, when $\alpha=2$, we have
\begin{align*}
H_2 (X) &= -\log\left(\sum_{i\in \mathbb{Z}/p\mathbb{Z}} f(i)^2 \right) = -\log \mathbf{P}\left\{X'=X''\right\}
\end{align*}
where $X'$ and $X''$ are two i.i.d. copies of $X$. Thus $H_2(X)$ captures the probability that two i.i.d. random variables take the same value, 
because of which it is sometimes called the {\it collision entropy}. 
Similarly, since $H_\infty (X) =\min_i -\log f(i)$ when $X$ has finite support, it is sometimes called the {\it min-entropy}.

\section{Main Results}\label{sec:main}

\begin{defn}
We define a collection of ordered index tuples $\mathfrak{I}(f)$ induced by a function $f$ as follows.
\begin{align*}
    \mathfrak{I}(f)=\left\{\left( i_1,\cdots,i_p\right) \big| f(i_1)\geq f(i_2)\geq \cdots \geq f(i_p) \right\}.
\end{align*}
\end{defn}
Because of the possibility of equal values, we note that there may exist more than one such ordered index tuple in $\mathfrak{I}(f)$ for a given $f$. So $\mathfrak{I}(f)$ may contain multiple tuple elements. 

\begin{defn}\label{def:shape-compatibility}
For an index tuple collection $\mathfrak{I}$, we say that $f$ is $\mathfrak{I}$-compatible if $\mathfrak{I}\cap \mathfrak{I}(f)\neq \emptyset$. Then we say that $f$ and $g$ are {{\it shape-compatible}} if both $f$ and $g$ are $\mathfrak{I}(f)\cap \mathfrak{I}(g)$-compatible. More generally, we say that $f_1,\cdots, f_n$ are {{\it shape-compatible}} if all $f_i$'s are $\mathfrak{I}(f_1)\cap\cdots\cap\mathfrak{I}(f_{n})$-compatible.
\end{defn}

We usually omit the common compatible set $\mathfrak{I}(\neq \emptyset)$ for the brevity when we describe the shape-compatibility unless it shows an ambiguity. Informally, if two functions are shape-compatible, the distributions of their values have perfect similarity of shape (where
we may think of shape in the context of plotting a graph of values along $\zpz$). 
For further convenience, we define two order collections $\mathfrak{I}^+$ and $\mathfrak{I}^-$ by
\begin{align*}
\mathfrak{I}^+ &= \left\{ \left( 0, +1, -1, \cdots, +\frac{p-1}{2},-\frac{p-1}{2} \right)\right\},\\
\mathfrak{I}^- &= \left\{\left( 0, -1, +1, \cdots, -\frac{p-1}{2},+\frac{p-1}{2} \right)\right\}.
\end{align*}
Then $\mathfrak{I}^+\subseteq \mathfrak{I}(f^+)$ and $\mathfrak{I}^-\subseteq \mathfrak{I}(f^-)$. We also write $\mathfrak{I}^*\subseteq \mathfrak{I}(f) $ if $\mathfrak{I}(f)$ contains 
both $\mathfrak{I}^+$ and $\mathfrak{I}^-$ simultaneously, i.e., $\mathfrak{I}^*= \mathfrak{I}^+\cup \mathfrak{I}^-$. So if $f$ is $\triangle$-regular, $\mathfrak{I}^* \subseteq \mathfrak{I}(f^*)$. 
These notions enable us to describe the shape of the distribution intuitively. 

\begin{thm}\label{thm:decomposition}
For any probability mass function $f:\zpz\to\mathbb{R}_+$, choose one of ordered index tuples from $\mathfrak{I}(f)$, say $\mathfrak{I}$. Then there exists unique $f_{\triangle}$ and $f_{\square}$ such that
\begin{align*}
f=f_{\triangle} + f_{\square}
\end{align*}
where $f_{\triangle}$ is $\triangle$-regular and $f_{\square}$ is $\square$-regular, and  $f_{\triangle}$, and $f_{\square}$ are shape-compatible.
\end{thm}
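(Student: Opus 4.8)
The plan is to reduce everything to a one-dimensional problem about the sorted vector of values. Fix the ordered index set $\mathfrak{I}(f)=\{i_1,\dots,i_p\}$ and write $v_k=f(i_k)$, so that $v_1\geq v_2\geq\cdots$. Since all three of $f,f_\triangle,f_\square$ are required to be shape-equivalent with the \emph{same} ordered index set, I would search for $f_\triangle(i_k)=a_k$ and $f_\square(i_k)=b_k$ with both $(a_k)$ and $(b_k)$ nonnegative and non-increasing, so that the problem becomes: split the non-increasing sequence $(v_k)$ as $v_k=a_k+b_k$. In this language the regularity hypotheses become pairing conditions on the value sequences: $\triangle$-regularity forces $a_1$ to be unpaired with $a_{2j}=a_{2j+1}$ for every $j\geq 1$, while $\square$-regularity forces $b_{2j-1}=b_{2j}$ for every $j\geq 1$, and (in the finite case $p=2m+1$, which is the relevant case since $p$ is an odd prime) the single leftover bottom value $b_p$ must vanish so that $f_\square$ has even support.

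Next I would solve this linear system by telescoping. Setting $d_k=v_k-v_{k+1}\geq 0$ and subtracting consecutive decomposition equations, the pairing constraints show that the odd-indexed gaps $d_1,d_3,\dots$ are absorbed entirely by the $\triangle$-part and the even-indexed gaps $d_2,d_4,\dots$ entirely by the $\square$-part: concretely $a_1-a_2=d_1$, $a_{2k}-a_{2k+2}=d_{2k+1}$, and $b_{2k-1}-b_{2k+1}=d_{2k}$. Consequently the whole pair of sequences is governed by a single parameter, say $a_1$ with $b_1=v_1-a_1$. The boundary condition $b_p=0$ in the finite case — or, for $G=\mathbb{Z}$, the requirement $a_k\to 0$ and $b_k\to 0$ forced by summability of the sub-distributions — then fixes this parameter uniquely. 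This single construction yields both existence (the explicit cascade) and uniqueness (no freedom survives once the boundary value is pinned down).

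Finally I would verify that the constructed sequences actually define admissible $f_\triangle,f_\square$, i.e.\ that they are nonnegative and non-increasing and hence genuinely shape-equivalent to $f$. Solving the cascade from the bottom up gives closed forms $a_{2k}=v_p+\sum_{i>k}d_{2i-1}\geq v_p\geq 0$ and $b_{2k-1}=\sum_{i\geq k}d_{2i}\geq 0$, and the monotonicity of each sequence follows immediately because its consecutive differences are precisely the nonnegative gaps $d_k$. In the integer case the same formulas hold with the sums running to infinity, the decay $v_k\to 0$ guaranteeing convergence and the vanishing of the tails.

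I expect the main obstacle to be identifying the correct boundary condition and confirming that it is exactly what collapses the naive one-parameter family of algebraic solutions to a single one: the pairing constraints alone leave one degree of freedom, and it is the requirement that the $\square$-regular part have even support (the unpaired smallest value being zero when $p$ is finite, decay to zero when $G=\mathbb{Z}$) that removes it. A secondary point requiring care is the index bookkeeping for $p=2m+1$ — keeping straight which element is the unpaired top of the $\triangle$-part and which is the unpaired bottom of the $\square$-part — and checking nonnegativity at the bottom of the cascade, since that is the only place where positivity could conceivably fail.
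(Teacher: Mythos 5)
Your proof is correct and takes essentially the same route as the paper's: both assign the odd-indexed gaps $a_r=f^{[r]}-f^{[r+1]}$ of the sorted value sequence to the $\triangle$-part and the even-indexed gaps to the $\square$-part, you by solving the telescoping recurrence with the boundary condition $f_\square(i_p)=0$ (or decay to $0$ when $p=\infty$), the paper by the equivalent layer-cake sums $f_\triangle=\sum_{r\,\mathrm{odd}} a_r\{i_1,\dots,i_r\}$ and $f_\square=\sum_{r\,\mathrm{even}} a_r\{i_1,\dots,i_r\}$. Your uniqueness argument (one remaining degree of freedom collapsed by the even-support boundary condition) is substantively the same as the paper's top-down iteration, and is if anything slightly more explicit about why that boundary condition is what pins the decomposition down.
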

\begin{proof}
See Section \ref{sec:prop}.
\end{proof}

We remark that if $f_{\triangle}$ and $f_{\square}$ are shape-compatible, it is easy to see that
\begin{align*}
f^+ = f^+_{\triangle}+f^+_{\square}, \quad f^- = f^-_{\triangle}+f^-_{\square}.
\end{align*}

Let us also note that the decomposition is no longer unique if we remove the assumption that $f_{\triangle}$ and $f_{\square}$ are  shape-compatible.
To see this, let $f=a_1\{i_1\} + a_2\{i_2,i_3\}$ such that $f(i_1)=a_1, f(i_2)=f(i_3)=a_2$ and $a_1\geq a_2$. Then $f$ itself is $\triangle$-regular. So we can find a decomposition $f=f_{\triangle}$. But we can also choose $f'_{\triangle}=a_1\{i_1\}$ and $f'_{\square} = a_2\{i_2,i_3\}$, where $f=f'_{\triangle} + f'_{\square}$. Therefore the uniqueness cannot be guaranteed without the shape-compatibility assumption.

Based on Theorem~\ref{thm:decomposition}, it is more convenient to state our main theorem by using 
function-convolution forms instead of random variable-addition forms. To do that, we define the convolution $\star$ between $f$ and $g$ by
\begin{align*}
f\star g (k) = \sum_{i+ j = k} f(i)g(j).
\end{align*}
Furthermore, as we indicated in the introduction, it is impossible to choose a permutation $\sigma$ from $+$ and $-$ rearrangements in general. One can find examples satisfying
\begin{align*}
H_{\alpha}\left(f^+\star g^+\right) > H_{\alpha}\left(f^+ \star g^-\right), \quad \text{or}\quad H_{\alpha}\left(f^+\star g^-\right) > H_{\alpha}\left(f^+ \star g^+\right), \quad \text{etc.}
\end{align*}
So our answer admits more complicated forms.

\begin{thm}\label{thm:main}
Let $f_1,f_2,\cdots,f_n$ be probability mass functions on $\zpz$.
Then there exists a function
$$
\delta: \{1,\ldots, n\} \times \{ \triangle,\square\}^n \rightarrow \{+,-\}
$$ 
such that
$$
\mathfrak{I}^+ \subseteq \mathfrak{I}\left( f_{1,\omega_1}^{\delta(1,\omega)} \star f_{2,\omega_2}^{\delta(2,\omega)} \star \cdots \star f_{n,\omega_n}^{\delta(n,\omega)}  \right)
$$ 
for each $\omega=(\omega_1,\cdots, \omega_n)\in \{ \triangle,\square\}^n$, where $f_{i,\triangle}$ and $f_{i,\square}$ denote a decomposition of $f$
in the sense of Theorem~\ref{thm:decomposition}.
Moreover, for any $\alpha\in [0,\infty]$ and any such function $\delta$,
\begin{align}\label{eqn:lowerboundentropy}
H_{\alpha}\left( f_1 \star f_2 \star \cdots \star f_n \right) \geq 
H_{\alpha} \left( \sum_{\mathbf{\omega}=(\omega_1,\cdots,\omega_n)\in\{ \triangle,\square\}^n} f_{1,\omega_1}^{\delta(1,\omega)} \star f_{2,\omega_2}^{\delta(2,\omega)} \star \cdots \star f_{n,\omega_n}^{\delta(n,\omega)}  \right) .
\end{align}
\end{thm}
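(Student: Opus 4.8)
The plan is to deduce both assertions from a single majorization principle: I will show that the distribution on the right-hand side of \eqref{eqn:lowerboundentropy} majorizes $f_1\star\cdots\star f_n$, and then invoke the fact that the R\'enyi entropy $H_\alpha$ is Schur-concave (order-reversing under majorization) for every $\alpha\in[0,\infty]$. The latter is routine: for $\alpha\in(0,1)$ the map $t\mapsto t^\alpha$ is concave and $\tfrac{1}{1-\alpha}\log$ is increasing, for $\alpha>1$ both monotonicities reverse, and the endpoint cases $\alpha\in\{0,1,\infty\}$ follow by continuity or directly, so a more concentrated distribution always has no larger entropy.

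First I would expand the left-hand side. Using Theorem~\ref{thm:decomposition} to write $f_i=f_{i,\triangle}+f_{i,\square}$ and the bilinearity of $\star$,
\begin{align*}
f_1\star\cdots\star f_n=\sum_{(\omega_1,\ldots,\omega_n)\in\{\triangle,\square\}^n} f_{1,\omega_1}\star\cdots\star f_{n,\omega_n},
\end{align*}
so that both sides of \eqref{eqn:lowerboundentropy} are sums of the \emph{same} $2^n$ convolutions; the right-hand side differs only in that each regular factor $f_{i,\omega_i}$ is replaced by an oriented rearrangement $f_{i,\omega_i}^{\delta(i,\omega_i)}$. The guiding observation is that a rearranged $\triangle$-regular block is symmetric and unimodal about $0$, while a rearranged $\square$-regular block is symmetric and unimodal about $+\tfrac12$ or $-\tfrac12$ according to the sign $+$ or $-$; convolving such blocks yields a symmetric unimodal function whose center is the sum of the individual (half-integer) centers, and whose induced ordering is $\mathfrak{I}^+$ exactly when that center lies in $\{0,+\tfrac12\}$.

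The choice of $\delta$ is the combinatorial heart of the argument. For a fixed profile $\omega$, only the factors with $\omega_i=\square$ contribute a $\pm\tfrac12$ to the center, so I would select the orientations $\delta(i,\square)$ so that, for every $\omega$, the center of $B_\omega:=f_{1,\omega_1}^{\delta(1,\omega_1)}\star\cdots\star f_{n,\omega_n}^{\delta(n,\omega_n)}$ is pinned into the range that forces the ordering $\mathfrak{I}^+$; I expect this to proceed by induction on the number of active $\square$-factors, tracking the center as a function of the signs and exploiting the parity bookkeeping already encoded in the $\triangle/\square$ dichotomy (odd versus even support). Because every $B_\omega$ is then driven to the common ordering $\mathfrak{I}^+$, the $2^n$ functions $B_\omega$ are comonotone, and in particular $\sum_\omega B_\omega$ again induces $\mathfrak{I}^+$.

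Next I would establish the per-term majorization (written $A_\omega\preceq B_\omega$, meaning $B_\omega$ majorizes $A_\omega$), where $A_\omega=f_{1,\omega_1}\star\cdots\star f_{n,\omega_n}$: this is the cyclic convolution rearrangement inequality, building on the line result of \cite{HLP88:book} and its prime-cyclic extension by Lev \cite{Lev01}, applied inductively across the $n$ factors. Finally I would lift these to the sum: since the $B_\omega$ are comonotone, the sum of the $k$ largest values of $\sum_\omega B_\omega$ equals $\sum_\omega$ (sum of the $k$ largest values of $B_\omega$), whereas the top-$k$ sum of $\sum_\omega A_\omega$ is at most $\sum_\omega$ (sum of the $k$ largest values of $A_\omega$); combining this with the termwise bound $A_\omega\preceq B_\omega$ and the equality of total masses shows $\sum_\omega B_\omega$ majorizes $\sum_\omega A_\omega=f_1\star\cdots\star f_n$, and Schur-concavity of $H_\alpha$ then yields \eqref{eqn:lowerboundentropy}. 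The hardest part will be the two cyclic-group subtleties that are intertwined here: proving the $n$-fold convolution rearrangement inequality on $\zpz$, where wraparound can destroy naive unimodality (exactly the difficulty resolved by Lev's analysis), while \emph{simultaneously} certifying that one sign function $\delta$ orients all $2^n$ centers consistently so that comonotonicity, and hence the passage from termwise to aggregate majorization, is legitimate.
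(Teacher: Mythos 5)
Your proposal follows essentially the same route as the paper: decompose each $f_i$ via Theorem~\ref{thm:decomposition}, expand the convolution by bilinearity into $2^n$ terms, majorize each term by its rearranged version using the Lev--Hardy--Littlewood--P\'olya rearrangement machinery (the paper's Theorem~\ref{thm:gen_maj}), choose orientations so all rearranged terms induce $\mathfrak{I}^+$ (Proposition~\ref{prop:choose}), add the termwise majorizations using shape-equivalence --- your comonotonicity step is exactly the paper's Lemma~\ref{lem:maj_sum} --- and conclude by Schur-concavity of $H_\alpha$ (Proposition~\ref{prop:maj_to_convex}). The one point to watch is that the $\pm$ orientation of the $\square$-parts must be balanced \emph{within each summand separately} (as the paper's proof actually does), not by a single sign per index valid for all $2^n$ profiles simultaneously, which is impossible once $n\ge 2$; this imprecision is already present in the theorem's own declaration of $\delta$, and it is harmless since the $\triangle$-parts are sign-insensitive and all valid choices give the same extremal distribution by Theorem~\ref{thm:same}.
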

\begin{proof}
See Section \ref{sec:mainthm}.
\end{proof}

We remark that by controlling $\delta(i,\omega)$'s, we are able to keep all summands in \eqref{eqn:lowerboundentropy} to be $\mathfrak{I}^+$-compatible. Otherwise, the inequality \eqref{eqn:lowerboundentropy} can be false. 
The way to choose each $\delta(i,\omega)$'s is discussed in Section \ref{subsec:controlshape} below.

Although the choice of 
the values $\delta(i,\omega)$ is not unique, the lower bound will be the same for all allowed choices. If all $f_i$'s are either $\triangle$-regular or $\square$-regular, then one can easily check that all values of lower bounds are the same
since any of the lower bounds form valid lower bounds for any of the others
by just applying Theorem~\ref{thm:main}. In fact, however, not only is the value
of the entropy in the lower bound the same for all allowed choices, the extremal distribution is also the same no matter how we choose the $\delta(i,\omega)$'s without any regularity assumptions on $f_i$'s. 

\begin{thm}\label{thm:same}
All valid choices of $\delta(i,\omega)$'s in Theorem \ref{thm:main} give the same distribution in the lower bound.
\end{thm}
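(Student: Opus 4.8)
The plan is to prove the stronger fact that the two lower-bound distributions agree term by term. Write the distribution appearing in \eqref{eqn:lowerboundentropy} as
\[
g_\delta \;=\; \sum_{\omega\in\{\triangle,\square\}^n} T_\omega^\delta,
\qquad
T_\omega^\delta \;=\; f_{1,\omega_1}^{\delta(1,\omega_1)}\star\cdots\star f_{n,\omega_n}^{\delta(n,\omega_n)},
\]
and fix two valid functions $\delta,\delta'$. I will show $T_\omega^\delta = T_\omega^{\delta'}$ for every fixed $\omega$, which immediately yields $g_\delta = g_{\delta'}$. The first reduction is to discard the $\triangle$-coordinates: since $f_{i,\triangle}$ is $\triangle$-regular we have $f_{i,\triangle}^+ = f_{i,\triangle}^- = f_{i,\triangle}^{*}$, so a factor with $\omega_i=\triangle$ does not depend on $\delta(i,\triangle)$ at all. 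Hence, writing $S=\{i:\omega_i=\square\}$, the term splits as $T_\omega^\delta = A_\omega \star B_\omega^\delta$, where $A_\omega=\bigstar_{i\notin S} f_{i,\triangle}^{*}$ is independent of $\delta$ and symmetric about $0$, while all the $\delta$-dependence sits in $B_\omega^\delta=\bigstar_{i\in S} f_{i,\square}^{\delta(i,\square)}$.

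The crux is a swap identity: for any two $\square$-regular functions $b,c$,
\[
b^+\star c^- \;=\; b^-\star c^+ .
\]
To prove it I would use that $f^-$ is the mirror image of $f^+$, i.e.\ $f^-(z)=f^+(-z)$, together with the elementary equivariance $(u\circ R)\star(v\circ R)=(u\star v)\circ R$ for the reflection $R(z)=-z$; this gives $b^-\star c^+=(b^+\star c^-)\circ R$, so it is enough to check that $b^+\star c^-$ is symmetric about $0$. Since $b^+$ is symmetric about $+\tfrac12$ (that is, $b^+(z)=b^+(1-z)$) and $c^-$ about $-\tfrac12$ (that is, $c^-(z)=c^-(-1-z)$), a one-line reindexing of the convolution sum shows $(b^+\star c^-)(-k)=(b^+\star c^-)(k)$, completing the identity. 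Together with the commutativity of $\star$, this shows that $B_\omega^\delta$ depends on $\delta$ only through the number $m_\omega(\delta)=|\{i\in S:\delta(i,\square)=+\}|$ of positive signs, and not on which coordinates carry them: any two sign patterns on $S$ with the same number of $+$'s are connected by transpositions of adjacent unequal signs, each of which preserves the convolution by the swap identity.

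It then remains to see that validity forces $m_\omega(\delta)=m_\omega(\delta')$. Each factor $f_{i,\square}^{\delta(i,\square)}$ is symmetric about $\pm\tfrac12$ according to its sign, so $B_\omega^\delta$ is symmetric about $\tfrac12\big(2m_\omega(\delta)-|S|\big)$, and since $A_\omega$ is symmetric about $0$, the same is true of $T_\omega^\delta$. But the constraint $\mathfrak{I}(T_\omega^\delta)=\mathfrak{I}^+$ from Theorem~\ref{thm:main} forces the centre of symmetry into $\{0,\tfrac12\}$, since a function symmetric about any other centre is incompatible with the ordering $\mathfrak{I}^+$, and for a fixed parity of $|S|$ exactly one value of $m_\omega$ realizes a centre in $\{0,\tfrac12\}$. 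Hence $m_\omega(\delta)=m_\omega(\delta')$, so $B_\omega^\delta=B_\omega^{\delta'}$, whence $T_\omega^\delta=T_\omega^{\delta'}$ and, summing over $\omega$, $g_\delta=g_{\delta'}$.

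I expect the main obstacle to be the swap identity, and specifically the verification that $b^+\star c^-$ is symmetric about $0$ carried out honestly on the cyclic group $\zpz$: one must manipulate the half-integer centres $\pm\tfrac12$ modulo $p$ rather than appeal to the real-line picture, and the cyclic wrap-around means one cannot take unimodality of convolutions for granted. The saving grace is that only $R$-equivariance (symmetry about the sum of the centres) is actually needed, and this is preserved by convolution on any abelian group; the remaining delicate point is to confirm that the $\mathfrak{I}^+$ condition, with its precise tie-breaking, pins the centre to $\{0,\tfrac12\}$ and thereby determines $m_\omega$.
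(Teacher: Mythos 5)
Your proposal is correct, and its skeleton matches the paper's: both proofs first observe that the $\triangle$-regular factors are insensitive to the sign choice (since $h^+=h^-=h^*$), so everything reduces to showing that the convolution of the $\square$-regular factors is the same for all valid sign patterns. Where you diverge is in how that reduction is finished. The paper proves an induction lemma showing that the \emph{alternating} pattern $+,-,+,-,\dots$ yields exactly $s_1^+\star\cdots\star s_m^+\bigl(\cdot-\lfloor m/2\rfloor\bigr)$, and then asserts ``by reordering'' that every choice with $\bar{\delta}\in\{0,+1\}$ gives the same function. You instead isolate the swap identity $b^+\star c^-=b^-\star c^+$ (whose proof via reflection-equivariance and the symmetry of $b^+\star c^-$ about $0$ is correct), which cleanly justifies the reordering step the paper glosses over and shows the convolution depends only on the number $m_\omega(\delta)$ of $+$'s; you then pin that number using the $\mathfrak{I}^+$ constraint itself rather than the paper's explicit rule $\bar{\delta}\in\{0,+1\}$ from Section~\ref{subsec:controlshape}. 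Your route buys a tighter link between ``valid'' as literally defined in Theorem~\ref{thm:main} and the combinatorial sign-count; the paper's route buys an explicit canonical form (a named shift of the all-$+$ convolution). The one step you should tighten is the claim that a function compatible with $\mathfrak{I}^+$ and symmetric under $z\mapsto k-z$ must have $k\in\{0,1\}$: as stated this fails for constant functions, and on $\zpz$ the quantity $2m_\omega-|S|$ is only pinned modulo $p$. Neither issue is fatal --- for non-constant $T$ the ordering $T(0)\geq T(1)\geq T(-1)\geq\cdots$ propagates equalities and forces constancy unless $k\equiv 0$ or $1\pmod p$, and sign patterns whose counts agree modulo $p$ produce shifts by multiples of $p$, hence identical functions --- but the argument needs to be run on the cyclic group rather than on $\mathbb{Z}$, as you yourself anticipate in your closing remarks.
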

\begin{proof}
See Section \ref{sec:canon}.
\end{proof}

In fact, one can identify a canonical description of the distribution that appears in the lower bound, as
explained in Section~\ref{sec:canon} where we prove Proposition~\ref{thm:same}.

Let us consider some special cases of Theorem \ref{thm:main} that are easier to state. To start, note that if we assume
regularity of some of the functions to begin with, we can dispense with needing to use decompositions.

\begin{cor}
Let $f$ be an arbitrary probability mass function. Assume $h_1,h_2,$ $\cdots,h_n$ are $\triangle$-regular and $s_1,s_2,\cdots,$
$s_{2m-1},s_{2m}$ are $\square$-regular.
Then
\begin{align*}
H_{\alpha}\big(f\star h_1\star h_2 \star &\cdots \star h_n \star s_1 \star s_2 \star \cdots\star  s_{2m-1} \star s_{2m} \big) \\
& \geq  H_{\alpha} \big( f^{+/-} \star h_1^* \star h_2^* \star \cdots \star h_n^* \star s_1^+ \star s_2^- \star \cdots \star s_{2m-1}^+ \star s_{2m}^- \big) ,
\end{align*}
where $f^{+/-}$ means that we can choose either $f^+$ or $f^-$.
\end{cor}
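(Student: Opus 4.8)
The plan is to deduce this Corollary directly from Theorem~\ref{thm:main} by observing that the hypothesized regularity assumptions collapse the relevant decompositions, and then tracking how the map $\delta$ acts on the summands. First I would apply Theorem~\ref{thm:main} to the collection of $n+2m+1$ functions
\begin{align*}
f_1 = f,\quad f_2 = h_1,\ \ldots,\ f_{n+1}=h_n,\quad f_{n+2}=s_1,\ \ldots,\ f_{n+2m+1}=s_{2m}.
\end{align*}
The key simplification is that for a $\triangle$-regular function $h_j$, the decomposition of Theorem~\ref{thm:decomposition} is trivial: one has $(h_j)_\triangle = h_j$ and $(h_j)_\square = 0$, since $h_j$ is already $\triangle$-regular and shape-equivalence forces the $\square$-component to vanish. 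Symmetrically, for a $\square$-regular $s_k$ one has $(s_k)_\square = s_k$ and $(s_k)_\triangle = 0$. Thus in the inner sum of \eqref{eqn:lowerboundentropy} only one value of each $\omega_i$ contributes a nonzero term for the indices corresponding to the $h_j$ and $s_k$, and the double-indexed sum over $\{\triangle,\square\}^n$ degenerates accordingly.

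Next I would verify that each surviving summand matches the convolution on the right-hand side of the Corollary. For the $\triangle$-regular factors, $h_j^{\delta} = h_j^*$ regardless of whether $\delta$ selects $+$ or $-$, since $h_j^+ = h_j^-=h_j^*$ by definition of $\triangle$-regularity; this is exactly why the starred factors appear. For the $\square$-regular factors, the claim that the pairs $s_{2\ell-1},s_{2\ell}$ may be taken with signs $+,-$ respectively is a matter of exhibiting \emph{one} admissible choice of $\delta$: since Theorem~\ref{thm:main} guarantees existence of a $\delta$ making every summand shape-equivalent to $\mathfrak{I}^+$, and since (as in the displayed examples preceding Theorem~\ref{thm:main}) pairing a $+$ rearrangement with a $-$ rearrangement is the canonical way to keep a convolution of two $\square$-regular functions symmetric about $0$, I would argue that the alternating pattern $s_1^+\star s_2^-\star\cdots$ is consistent with the shape constraint $\mathfrak{I}^+$. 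The freedom in $f^{+/-}$ then records the one remaining undetermined sign $\delta$ assigns to the arbitrary factor $f$, which may be resolved either way; Theorem~\ref{thm:same} guarantees the resulting lower bound is identical in both cases.

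The main obstacle I anticipate is the bookkeeping in the previous step: I must confirm that the specific sign pattern asserted in the Corollary ($*$ for all $\triangle$-regular factors, alternating $+,-$ for the $\square$-regular pairs, and $+/-$ for $f$) is genuinely one of the admissible $\delta$'s produced by Theorem~\ref{thm:main}, rather than merely some admissible $\delta$. This requires understanding the explicit rule for choosing $\delta(i,\omega_i)$ to control shape, which is deferred to Section~\ref{subsec:controlshape}; the cleanest route is to invoke that rule to see that once the $h_j$ contribute the symmetric factors $h_j^*$, the residual shape to be balanced is governed entirely by the $\square$-factors, for which pairing consecutive ones as $(+,-)$ produces a factor symmetric and unimodal about $0$, i.e.\ shape-equivalent to $\mathfrak{I}^+$. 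Since any two admissible $\delta$'s yield the same lower bound by Theorem~\ref{thm:same}, it suffices to exhibit this single convenient choice, which is what the Corollary states.
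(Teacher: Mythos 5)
Your route---specialize Theorem~\ref{thm:main}, note that the decompositions of the already-regular factors are trivial, and then exhibit one admissible sign pattern---is exactly how the paper treats this statement (its proof of Corollary~\ref{cor:two} is the two-factor instance of the same argument). The identifications $(h_j)_\square=0$ and $(s_k)_\triangle=0$ are correct, as is the observation that the alternating pattern $s_1^+\star s_2^-\star\cdots\star s_{2m-1}^+\star s_{2m}^-$ contributes $\bar\delta=0$ and is therefore admissible.

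The one step you elide concerns $f$ itself. Since $f$ is arbitrary, its decomposition $f=f_\triangle+f_\square$ does \emph{not} collapse, so Theorem~\ref{thm:main} does not hand you a single convolution but a two-term sum
\begin{align*}
f_\triangle^{\delta(1,\triangle)}\star h_1^*\star\cdots\star h_n^*\star s_1^{\pm}\star\cdots\star s_{2m}^{\pm}
\;+\;
f_\square^{\delta(1,\square)}\star h_1^*\star\cdots\star h_n^*\star s_1^{\pm}\star\cdots\star s_{2m}^{\pm},
\end{align*}
where a priori the signs on the $s_k$ could differ between the two summands, and $\delta$ assigns \emph{two} signs to $f$ (one to each part), not ``one remaining undetermined sign.'' To reach the stated form $f^{+/-}\star(\cdots)$ you must (i) verify that taking $\delta(1,\triangle)=\delta(1,\square)=+$ with the \emph{same} alternating $s$-signs in both summands is admissible---it is, since $\bar\delta$ equals $0$ for the $\triangle$-summand and $+1$ for the $\square$-summand, both allowed by Proposition~\ref{prop:choose}---and then (ii) recombine via $f^+=f_\triangle^++f_\square^+$, which holds because $f$, $f_\triangle$, $f_\square$ are shape-equivalent. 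This is precisely the recombination performed in the paper's proof of Corollary~\ref{cor:two}, and the symmetric check with $\bar\delta\in\{-1,0\}$ handles the $f^-$ option. Alternatively, you can avoid decomposing $f$ altogether: Theorem~\ref{thm:gen_maj} already states the majorization in the corollary's exact form, and combining it with Proposition~\ref{prop:maj_to_convex} gives the shortest complete proof.
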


Theorem \ref{thm:main} takes a particularly simple form when there are only two summands.

\begin{cor}\label{cor:two}
If $f$ and $g$ are probability mass functions on $\zpz$, then
\begin{align*}
H_{\alpha}\left( f \star g \right) \geq H_{\alpha} \left( f^+ \star g_{\triangle}^- + f^- \star g_{\square}^+ \right)
\end{align*}
where $g=g_{\triangle}+g_{\square}$ in the sense of Theorem~\ref{thm:decomposition}.
\end{cor}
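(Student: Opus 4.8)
The plan is to read Corollary~\ref{cor:two} as the specialization of Theorem~\ref{thm:main} to $n=2$, with $f_1=f$ and $f_2=g$, and then to collapse the $2^2=4$ convolutions on the right-hand side of \eqref{eqn:lowerboundentropy} into the two displayed terms. First I would invoke Theorem~\ref{thm:decomposition} to write $f=f_\triangle+f_\square$ and $g=g_\triangle+g_\square$ with all pieces shape-equivalent, so that the splitting identities $f^\pm=f_\triangle^\pm+f_\square^\pm$ and $g^\pm=g_\triangle^\pm+g_\square^\pm$ are available. Because $f_\triangle$ and $g_\triangle$ are $\triangle$-regular, their two rearrangements coincide, $f_\triangle^+=f_\triangle^-=f_\triangle^*$ and $g_\triangle^+=g_\triangle^-=g_\triangle^*$, so the signs $\delta(1,\triangle)$ and $\delta(2,\triangle)$ furnished by Theorem~\ref{thm:main} are immaterial and only the two $\square$-signs carry information.

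By Theorem~\ref{thm:main} the bound \eqref{eqn:lowerboundentropy} puts $H_\alpha(f\star g)$ above $H_\alpha$ of a canonical distribution, and by Theorem~\ref{thm:same} that distribution is independent of the admissible sign choices; it therefore suffices to identify it explicitly and match it to $f^+\star g_\triangle^-+f^-\star g_\square^+$. The four terms, stripped of the irrelevant $\triangle$-signs, are $f_\triangle^*\star g_\triangle^*$, $f_\triangle^*\star g_\square^+$, $f_\square^+\star g_\triangle^*$, and $f_\square^-\star g_\square^+$: convolving with a $\triangle$-regular factor leaves a half-integer offset unchanged, so the $g_\square$ factor must enter as $g_\square^+$ and (against $g_\triangle$) $f_\square$ as $f_\square^+$, while against $g_\square$ the two offsets $+\tfrac12$ must cancel and $f_\square$ enters as $f_\square^-$ to return the summand to $\mathfrak{I}^+$. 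Grouping by the $g$-factor and using $g_\triangle^-=g_\triangle^*$, the first and third terms give $(f_\triangle^*+f_\square^+)\star g_\triangle^*=f^+\star g_\triangle^-$, and the second and fourth give $(f_\triangle^*+f_\square^-)\star g_\square^+=f^-\star g_\square^+$, whose sum is exactly the argument of $H_\alpha$ in the corollary.

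The step I expect to be the main obstacle is justifying that $f^-\star g_\square^+$ genuinely sits on $\mathfrak{I}^+$, that is, that $f_\square$ enters the $(\square,\square)$ term with a $-$ sign rather than with the naive $+$ sign that would shift the peak to the integer $+1$ and leave $\mathfrak{I}^+$. Here I would use the reflection identity $h^-(x)=h^+(-x)$, valid for every mass function $h$: a direct change of variable gives $(f^-\star g_\square^+)(k)=(f^+\star g_\square^-)(-k)$, so $f^-\star g_\square^+$ is the mirror image of $f^+\star g_\square^-$. Since reflection through $0$ carries the ordered set $\mathfrak{I}^-$ to $\mathfrak{I}^+$, it is enough to verify that $f^+\star g_\square^-$ is shape-equivalent to $\mathfrak{I}^-$, which is precisely the shape-control statement underlying Theorem~\ref{thm:main} in Section~\ref{subsec:controlshape}. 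Once both grouped terms are certified to be shape-equivalent to $\mathfrak{I}^+$, the inequality \eqref{eqn:lowerboundentropy} applies verbatim and yields Corollary~\ref{cor:two}.
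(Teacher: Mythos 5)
Your proposal is correct and follows essentially the same route as the paper: specialize Theorem~\ref{thm:main} to $n=2$, obtain the four summands $f_\triangle^+\star g_\triangle^-$, $f_\triangle^-\star g_\square^+$, $f_\square^+\star g_\triangle^-$, $f_\square^-\star g_\square^+$ via the sign rule of Section~\ref{subsec:controlshape}, and regroup using $f^\pm=f_\triangle^\pm+f_\square^\pm$ and $g_\triangle^+=g_\triangle^-$. Your extra verification that $f_\square^-\star g_\square^+$ lands on $\mathfrak{I}^+$ (via the reflection identity and the $\bar{\delta}$ bookkeeping) is a sound elaboration of what the paper leaves to Proposition~\ref{prop:choose}, and nothing is missing.
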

\begin{proof}
By applying Theorem \ref{thm:main} and following the rule in Section \ref{subsec:controlshape},
\begin{align*}
H_{\alpha} \left( f \star g \right) \geq H_{\alpha} \left( f^+_{\triangle} \star g^-_{\triangle} + f^-_{\triangle} \star g^+_{\square} + f^+_{\square} \star g^-_{\triangle} + f^-_{\square} \star g^+_{\square} \right).
\end{align*}
Since $f,f_{\triangle}$, and $f_{\square}$ are all shape-compatible, $f^+ = f^+_{\triangle}+f^+_{\square}$ and $f^- = f^-_{\triangle}+f^-_{\square}$. Therefore the Corollary follows.
\end{proof}

\section{Applications} \label{section:applications}

\subsection{Littlewood-Offord problem}
\label{sec:Rinf}

The original Littlewood-Offord lemma is often stated in the following probabilistic language.
Let $X_i$ be i.i.d. Bernoulli random variables, taking values 0 and 1, each with probability $\half$.
Suppose $\mathbf{a}=(a_1, \ldots, a_n)$, where  $a_i$ are nonzero real numbers. Define the random variable
\be\label{eq:sum}
S_{\mathbf{a}} = \sum_{i=1}^n X_i a_i ,
\ee
and let $Q(\mathbf{a})=\max_{x} P (S_\mathbf{a}=x)$. Then Littlewood and Offord \cite{LO43} proved that
\ben
Q(\mathbf{a}) =O(n^{-\half} \log n) ,
\een
and conjectured that the log factor was unnecessary. Erd\H{o}s \cite{Erd45} not only removed the log factor but also
identified an extremal $\mathbf{a}$; specifically he
showed that if $\mathbf{a}^*=(1,\ldots, 1)$ has each entry equal to 1, 
then 
\ben
Q(\mathbf{a}) \leq Q(\mathbf{a}^*) ,
\een
from which the determination of the asymptotic behavior becomes a simple computation.
This result has many beautiful ramifications, variants and generalizations, some of which have found deep applications 
in areas such as random matrix theory; several surveys are now available \cite{NV13, Kri16, GEZ17}.

%
%
%
%
%
%

More generally, instead of defining $Q(\mathbf{a})$ as above, one can study the  maximal ``small ball probability''
$$
Q_{\calL, n, \eps}(\mathbf{a})= \max_x \mathbf{P} \{S_\mathbf{a}\in [x, x+\eps)\},
$$
where we have made the dependence on the law $\calL$ of the random variables $X_i$,
the number $n$ of summands, and the ``bandwidth'' $\eps>0$, explicit in the notation.
Note that $Q_{\calL, n, \eps}$ may be considered as a map from $(\mathbb{R}\setminus\{0\})^n$ to $[0,1]$.
If $X_1,\cdots,X_n$ are i.i.d. random variables distributed according to $\calL$, and  $S_{\mathbf{a}}$ is defined as in \eqref{eq:sum},
the generalized Littlewood-Offord problem asks for estimates of $Q_{\calL, n, \eps}(\mathbf{a})$.
It was shown by Rudelson and Vershynin \cite{RV08:1} that if $X_i$ has a finite third moment,
then the small ball probability is $O(n^{-1/2})$ by way of the Berry-Ess\'een theorem. 
Moreover, there is now a thriving literature on inverse Littlewood-Offord theorems, which 
seem to have originated in work of Arak \cite{Ara80, Ara81} (expressed using concentration functions)
and has been recently rediscovered by Tao, Nguyen and Vu \cite{TV09:2, NV11} with many subsequent rapid developments
(surveys and discussion of history can be found in \cite{GEZ17, NV13}). The general theme here is that 
if the small ball probability $Q_{\calL, n, 0}$ is ``large'' (in the sense that one has lower bounds
that decay sufficiently slowly with $n$), then the coefficient vector $\mathbf{a}$ has a strong additive structure.

Let us discuss how to apply Theorem \ref{thm:main} to problems of Littlewood-Offord type. 
In our setting, we assume that $\mathbf{a}=(a_i)\in\mathbb{Z}^n\setminus \{\mathbf{0}\}$ so that we keep the domain of $a_iX_i$ to be in $\mathbb{Z}$ itself. When $\epsilon<1$, one can easily see that
\begin{align*}
Q_{\calL, n, \eps}(\mathbf{a}) = e^{-H_{\infty}(S_\mathbf{a})}.
\end{align*}
Therefore, maximizing $p_{\epsilon}(\mathbf{a})$ is equivalent to minimizing $H_{\infty}(S_\mathbf{a})$ when $\epsilon <1$. 
More generally, we can answer the same question of the Littlewood-Offord problem for $\zpz$. 
For $a\in\zpz$ and $x\in\zpz$, 
we are able to give not only an explicit maximizer of the Littlewood-Offord problem in $\zpz$ but a more general answer in the sense of the entropy by applying Theorem \ref{thm:main} even without identically distributed assumption. 

\begin{defn}
For some $c\in\zpz$, we say $g$ is $c$-{\it circular} with respect to $f$ if $f(i+ c)=g(i)$. 
\end{defn}

If $g$ is $c$-circular with respect to $f$, one can easily check that $g\star h$ is $c$-circular with respect to $f\star h$ and 
$H_{\alpha} (g\star h)=H_{\alpha} (f\star h)$. As a special case, if $f$ is $\square$-regular, $f^-$ is $1$-circular with respect to $f^+$. 
Therefore if $f_i$ is either $\triangle$-regular or $\square$-regular, 
$H_{\alpha}\left( f_1^{\delta(1)}\star \cdots \star f_{n}^{\delta(n)} \right)$ is preserved for any choice of $\delta(i)\in\{+,-\}$.

For notational convenience, we write $X^+$ and $X^-$ to mean random variables with the probability mass function $f^+$ and $f^-$ respectively.

\begin{thm}\label{thm:lo}
Let $X_1, \ldots, X_n$ be independent random variables taking values in $\zpz$.
For each $i$, suppose that the probability mass function $f_i$ of $X_i$
is either $\triangle$-regular or $\square$-regular, and $f_i$ itself is the same as either $f^*$, $f^+$, or $f^-$.
If $S_\mathbf{a}=a_1X_1 + \cdots + a_nX_n$ where $\mathbf{a}=(a_i)\in\mathbb{Z}^n\setminus \{\mathbf{0}\}$, then
\begin{align*}
H_{\alpha}\left( S_\mathbf{a} \right) \geq H_{\alpha}\left (X_1+ \cdots + X_n \right).
\end{align*}
\end{thm}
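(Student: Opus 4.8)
The plan is to realize $S_{\mathbf a}$ as a convolution of \emph{dilated} copies of the $f_i$ and then feed this into Theorem~\ref{thm:main}, exploiting that dilation by a nonzero scalar in $\zpz$ is a bijection. Assume each $a_i\not\equiv 0$ (otherwise $a_iX_i$ is degenerate and the coordinate is dropped); since $p$ is prime, multiplication by $a_i$ permutes $\zpz$, so the pmf $g_i$ of $a_iX_i$ is $g_i(k)=f_i(a_i^{-1}k)$, a rearrangement of $f_i$ having the same multiset of values. By independence the pmf of $S_{\mathbf a}$ is $g_1\star\cdots\star g_n$, hence $H_\alpha(S_{\mathbf a})=H_\alpha(g_1\star\cdots\star g_n)$.

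First I would record the crucial invariance. The rearrangements $(\cdot)^+,(\cdot)^-$ and the $\triangle/\square$-decomposition of Theorem~\ref{thm:decomposition} are determined only by the sorted value sequence $f(i_1)\ge\cdots\ge f(i_p)$ along an ordered index set. Choosing $\mathfrak{I}(g_i)$ compatibly, namely $\mathfrak{I}(g_i)=a_i\cdot\mathfrak{I}(f_i)$ (legitimate because multiplication by $a_i$ is a bijection and $g_i(a_ii_j)=f_i(i_j)$), gives $g_i$ the same sorted value sequence as $f_i$, so that
\[
g_i^{+}=f_i^{+},\quad g_i^{-}=f_i^{-},\quad g_{i,\triangle}^{\pm}=f_{i,\triangle}^{\pm},\quad g_{i,\square}^{\pm}=f_{i,\square}^{\pm}.
\]
In words, dilation leaves unchanged every rearrangement that can appear in the lower bound of Theorem~\ref{thm:main}.

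Next I would apply Theorem~\ref{thm:main} to the (arbitrary) pmfs $g_1,\dots,g_n$, obtaining a $\delta$ with
\[
H_\alpha(g_1\star\cdots\star g_n)\ \ge\ H_\alpha\Big(\textstyle\sum_{\omega} g_{1,\omega_1}^{\delta(1,\omega_1)}\star\cdots\star g_{n,\omega_n}^{\delta(n,\omega_n)}\Big).
\]
Here the regularity hypothesis does the work. Each $f_i$ is $\triangle$- or $\square$-regular, so its decomposition is trivial: writing $\omega_i^{*}$ for the type of $f_i$, one has $f_{i,\omega_i}=0$ for $\omega_i\ne\omega_i^{*}$, whence by the invariance $g_{i,\omega_i}^{\delta(i,\omega_i)}=f_{i,\omega_i}^{\delta(i,\omega_i)}=0$ as well. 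Every summand except the one with $\omega=\omega^{*}$ thus carries a zero factor and vanishes, and the right-hand side collapses to the single term $f_1^{\delta_1}\star\cdots\star f_n^{\delta_n}$, where $\delta_i:=\delta(i,\omega_i^{*})$ and I have used $g_{i,\omega_i^{*}}=g_i$ together with $g_i^{\delta_i}=f_i^{\delta_i}$. Finally, since every $f_i$ is regular and equals one of $f_i^{*},f_i^{+},f_i^{-}$, the preservation property recorded just before the theorem (that $H_\alpha(f_1^{\delta(1)}\star\cdots\star f_n^{\delta(n)})$ is independent of the signs $\delta(i)$) yields
\[
H_\alpha\big(f_1^{\delta_1}\star\cdots\star f_n^{\delta_n}\big)=H_\alpha\big(f_1\star\cdots\star f_n\big)=H_\alpha\big(X_1\oplus\cdots\oplus X_n\big),
\]
completing the chain of (in)equalities.

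The step I expect to be the main obstacle is the invariance $g_{i,\omega}^{\pm}=f_{i,\omega}^{\pm}$: one must verify that the decomposition of Theorem~\ref{thm:decomposition} depends on $f$ only through its sorted value sequence, so that relabeling the domain by $x\mapsto a_ix$ transports $f_{i,\triangle},f_{i,\square}$ to $g_{i,\triangle},g_{i,\square}$ with identical value multisets, and that the compatible index set $\mathfrak{I}(g_i)=a_i\cdot\mathfrak{I}(f_i)$ is admissible. This is precisely where primality is essential, through invertibility of $a_i$; for a composite modulus the dilation need not be a bijection and the argument breaks down.
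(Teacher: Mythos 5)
Your proposal is correct and follows essentially the same route as the paper's own (much terser) proof: dilation by a nonzero $a_i$ permutes $\zpz$ and hence leaves all rearrangements and the $\triangle/\square$-decomposition unchanged, the regularity hypothesis collapses the sum in Theorem~\ref{thm:main} to a single term, and the circularity remark before the theorem removes the remaining $\pm$ signs. The only quibble is your parenthetical about dropping coordinates with $a_i=0$ (the inequality as stated can fail then, e.g.\ $n=1$, $a_1=0$); the intended hypothesis, consistent with the paper's Littlewood--Offord setup, is that all $a_i$ are nonzero, which you in fact assume.
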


\begin{proof}
We observe that $a_iX_i$ does not change values in the probability mass functions. Therefore $(a_iX_i)^+=X_i^+$ and $(a_iX_i)^-=X_i^-$. Since $f_i$ is either $\triangle$-regular or $\square$-regular, the decomposition in the sense of Theorem~\ref{thm:decomposition} admits only one part. Since the entropy is preserved without rearrangements by the circularity, the conclusion follows.
\end{proof}

Observe that $\alpha=+\infty$ with the identically distributed assumption recovers a theorem of Littlewood-Offord type
in $\zpz$. 

Moreover, combining with a result of Kanter \cite[Lemma 4.3]{Kan76:1} or 
Mattner and Roos \cite[Theorem 2.1]{MR07}, one can obtain an estimate of the small ball probability 
as well as the explicit maximizer in some special cases. We first state the required lemma.

\begin{lem}[Kanter in \cite{Kan76:1}, Mattner and Roos in \cite{MR07}]\label{lem:kanter}
Let $X_1,\cdots,X_n$ be independent random variables such that $\mathbf{P}\left(X_i=0 \right) = q_i$ and 
$\mathbf{P}\left( X_i=-1 \right) = \mathbf{P}\left( X_i=+1 \right)$ $= \frac{1-q_i}{2}$, then
\begin{align*}
\mathbf{P}\left( X_1+\cdots + X_n = 0 \text{ or } 1  \right) \leq G\left(\sum_{i=1}^{n} (1-q_i)\right)
\end{align*}
where $G(x)=e^{-x}\left(I_0(x)+I_1(x) \right)$ and $I_k$ is the modified Bessel function of the order $k$. Furthermore, 
$G(x)$ is a complete monotone function with $G(0)=1$ and $G(x)\to 0$ as $x\to\infty$ such that for $x>0$
\begin{align*}
G(x) < \sqrt{\frac{2}{\pi x}}.
\end{align*}
\end{lem}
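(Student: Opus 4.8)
The plan is to reduce the whole statement to a comparison between a simple random walk run for a random number of steps and the same walk run for a Poisson number of steps, for which $G$ turns out to be exactly the relevant probability. Write each $X_i=\sigma_i B_i$, where $B_i\sim\mathrm{Ber}(1-q_i)$ is the indicator of $\{X_i\neq 0\}$ and $\sigma_i$ is an independent uniform $\pm 1$ sign. Conditioning on $(B_1,\dots,B_n)$, the sum $S_n=\sum_i X_i$ is a simple symmetric random walk run for $K:=\sum_i B_i$ steps, and by parity $\mathbf{P}(\mathrm{SRW}_k\in\{0,1\})=\binom{k}{\lceil k/2\rceil}2^{-k}=:r(k)$, so $\mathbf{P}(S_n\in\{0,1\})=\mathbf{E}[r(K)]$ with $K$ Poisson-binomial of mean $x=\sum_i(1-q_i)$. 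The identical computation for the continuous-time rate-one walk $W_t$ gives $G(x)=\mathbf{E}[r(\tilde K)]$ with $\tilde K\sim\mathrm{Poisson}(x)$. Writing $W_x=N_+-N_-$ with $N_\pm\sim\mathrm{Poisson}(x/2)$ independent and expanding the convolution in the Bessel series yields $\mathbf{P}(W_x=j)=e^{-x}I_{|j|}(x)$, whence $G(x)=\mathbf{P}(W_x\in\{0,1\})=e^{-x}(I_0(x)+I_1(x))$, establishing the closed form.

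It remains to prove $\mathbf{E}[r(K)]\le\mathbf{E}[r(\tilde K)]$. Using the representation $r(k)=\tfrac{2}{\pi}\int_{-1}^1(1-2y^2)^k\sqrt{1-y^2}\,dy$ together with $\mathbf{E}[(1-2y^2)^{B_i}]=1-2(1-q_i)y^2$ and the Poisson generating function $\mathbf{E}[(1-2y^2)^{\tilde K}]=e^{-2xy^2}$, this is \emph{equivalent} to the weighted integral inequality
\[
\int_{-1}^1\Big[\prod_{i=1}^n\big(1-2(1-q_i)y^2\big)-e^{-2xy^2}\Big]\sqrt{1-y^2}\,dy\le 0.
\]
On the range where every factor $1-2(1-q_i)y^2$ is nonnegative, the elementary bound $1-u\le e^{-u}$ forces the integrand to be $\le 0$ pointwise. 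The main obstacle is that for $1-q_i$ near one and $y^2>1/(2(1-q_i))$ individual factors turn negative, so the product is sign-indefinite and no pointwise comparison survives. Convex ordering does not rescue this: although the Poisson-binomial $K$ is dominated by $\tilde K$ in the convex order, the test sequence $r$ is not convex (indeed $\Delta^2 r(1)=-\tfrac18<0$). The resolution is to exploit that the troublesome region is exactly where the weight $\sqrt{1-y^2}$ is small, vanishing at $y=\pm1$: I would follow Mattner--Roos \cite{MR07} (building on Kanter \cite{Kan76:1}) in dominating the positive contribution of the sign-indefinite region by the strictly negative contribution near $y=0$; an alternative is to fold the sum using $r(2m-1)=r(2m)=\binom{2m}{m}4^{-m}$, which \emph{is} decreasing and log-convex, and then compare the folded counts $\lceil K/2\rceil$ and $\lceil\tilde K/2\rceil$. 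I expect this weighted-integral estimate to be the hard core of the argument.

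The remaining ``furthermore'' assertions follow cleanly from the representation obtained by substituting $y=\sin(s/2)$ in the Fourier inversion formula for $\mathbf{P}(W_x\in\{0,1\})$, namely $G(x)=\tfrac{2}{\pi}\int_{-1}^1 e^{-2xy^2}\sqrt{1-y^2}\,dy$. Pushing this forward under $s=2y^2$ exhibits $G(x)=\int_0^\infty e^{-xs}\,d\rho(s)$ for a positive measure $\rho$, so $G$ is completely monotone; moreover $G(0)=\tfrac{2}{\pi}\int_{-1}^1\sqrt{1-y^2}\,dy=1$, and $G(x)\to 0$ as $x\to\infty$ by dominated convergence. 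Finally, bounding $\sqrt{1-y^2}\le 1$ and enlarging the domain gives the sharp strict estimate
\[
G(x)\le \frac{2}{\pi}\int_{-1}^1 e^{-2xy^2}\,dy<\frac{2}{\pi}\int_{-\infty}^\infty e^{-2xy^2}\,dy=\sqrt{\frac{2}{\pi x}},\qquad x>0,
\]
with strictness coming from the extension of the integration range, which completes the proof.
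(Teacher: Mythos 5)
First, a point of comparison: the paper does not prove this lemma at all --- it is imported verbatim from Kanter \cite{Kan76:1} and Mattner--Roos \cite{MR07} and used as a black box in Section~\ref{sec:Rinf}, so there is no in-paper argument to measure you against. Much of what you write is correct and would survive scrutiny: the reduction of $\mathbf{P}(S_n\in\{0,1\})$ to $\mathbf{E}[r(K)]$ with $r(k)=\binom{k}{\lceil k/2\rceil}2^{-k}$ and $K$ Poisson-binomial, the Skellam computation giving $G(x)=\mathbf{P}(W_x\in\{0,1\})=e^{-x}(I_0(x)+I_1(x))$, the integral representation $r(k)=\tfrac{2}{\pi}\int_{-1}^1(1-2y^2)^k\sqrt{1-y^2}\,dy$ (which checks out after the substitution $y=\sin\theta$ and the identity $\binom{k+1}{(k+1)/2}=2\binom{k}{(k+1)/2}$ for odd $k$), and the entire ``furthermore'' block --- complete monotonicity via the Laplace-transform representation obtained from $s=2y^2$, the value $G(0)=1$, the decay at infinity, and the strict bound $G(x)<\sqrt{2/(\pi x)}$ from extending the Gaussian integral to the whole line. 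Your diagnosis of why the naive arguments fail (the factors $1-2(1-q_i)y^2$ change sign, and $r$ is not convex since $\Delta^2 r(1)=-\tfrac18$) is also accurate.

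The genuine gap is that the central inequality, $\mathbf{E}[r(K)]\le\mathbf{E}[r(\tilde K)]$ --- equivalently the weighted integral inequality you display --- is never proved. You correctly identify it as ``the hard core'' and then defer to Mattner--Roos for ``dominating the positive contribution of the sign-indefinite region by the strictly negative contribution near $y=0$,'' which is a description of a strategy, not an argument; no estimate is given that actually controls the region where some factors $1-2(1-q_i)y^2$ are negative, and the pointwise bound $|1-u|\le e^{-u}$ genuinely fails there (at $u=2$ one would need $1\le e^{-2}$). The proposed alternative via folding, $r(2m-1)=r(2m)=\binom{2m}{m}4^{-m}$, does not close the gap either: monotonicity and log-convexity of the folded sequence would require an ordering between $\lceil K/2\rceil$ and $\lceil\tilde K/2\rceil$ (stochastic, or increasing-concave) that you neither state precisely nor verify, and which does not follow from $K\le_{\mathrm{cx}}\tilde K$ because $k\mapsto\lceil k/2\rceil$ does not transport convex order usefully. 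So the proof as written establishes everything except the inequality that is the actual content of the lemma; to be self-contained it would need the Kanter/Mattner--Roos two-point or grouping argument spelled out, rather than cited.
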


Then we are able to find an estimate of the small ball probability in a some non-identically distributed cases.

\begin{cor}
Let $X_1,\cdots,X_n$ be independent random variables such that \\ $\mathbf{P}\left(X_i=0 \right)= q_i$ and 
$\mathbf{P}\left( X_i=-1 \right) = \mathbf{P}\left( X_i=+1 \right) = \frac{1-q_i}{2}$.
Suppose for each $i$, $1-q_i<\xi$ for some fixed $\xi>0$.
If $S_\mathbf{a}=a_1X_1 + \cdots + a_nX_n$ where $\mathbf{a}=(a_i)\in\mathbb{Z}^n\setminus \{\mathbf{0}\}$, then
\begin{align*}
\max_x \mathbf{P} \{S_\mathbf{a}=x\} \leq O\left( n^{-1/2} \right).
\end{align*}
\end{cor}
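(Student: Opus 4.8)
The plan is to use Theorem~\ref{thm:lo} to replace the weighted sum $S_{\mathbf a}$ by the unweighted sum $X_1\oplus\cdots\oplus X_n$ at no cost to the concentration, and then to estimate the latter directly through the Kanter--Mattner--Roos bound of Lemma~\ref{lem:kanter}. The natural R\'enyi order to work with is $\alpha=+\infty$, since, as noted just after Theorem~\ref{thm:lo}, for $\eps<1$ the small-ball quantity $\max_x\mathbf P\{S_{\mathbf a}=x\}$ is exactly $e^{-H_\infty(S_{\mathbf a})}$.

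First I would verify that each summand meets the hypotheses of Theorem~\ref{thm:lo}. The mass function $f_i$ of $X_i$ satisfies $f_i(0)=q_i$ and $f_i(+1)=f_i(-1)=(1-q_i)/2$, so it is symmetric about $0$; provided $q_i\ge (1-q_i)/2$ it is also unimodal about $0$, and hence $\triangle$-regular with $f_i=f_i^{*}$. Applying Theorem~\ref{thm:lo} with $\alpha=+\infty$ then gives $H_\infty(S_{\mathbf a})\ge H_\infty(X_1\oplus\cdots\oplus X_n)$, which in the language of small-ball probabilities reads
\begin{align*}
\max_x\mathbf P\{S_{\mathbf a}=x\}\le \max_x\mathbf P\{X_1\oplus\cdots\oplus X_n=x\}.
\end{align*}
Thus it suffices to bound the concentration of the \emph{unweighted} sum, for which the coefficients $a_i$ play no role.

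Next I would bound the right-hand side. Since the $X_i$ are independent, symmetric, and unimodal about $0$, their convolution is again symmetric and unimodal about $0$ (symmetric unimodal lattice laws are closed under convolution), so its largest atom sits at $0$ and
\begin{align*}
\max_x\mathbf P\{X_1\oplus\cdots\oplus X_n=x\}=\mathbf P\{X_1\oplus\cdots\oplus X_n=0\}\le \mathbf P\{X_1\oplus\cdots\oplus X_n\in\{0,1\}\}.
\end{align*}
The last quantity is precisely what Lemma~\ref{lem:kanter} controls: it is at most $G\big(\sum_{i=1}^n(1-q_i)\big)$, and the tail estimate $G(t)<\sqrt{2/(\pi t)}$ upgrades this to $\sqrt{2/\big(\pi\sum_i(1-q_i)\big)}$. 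The role of the fixed $\xi$ is then to force $\sum_i(1-q_i)$ to grow linearly in $n$, i.e.\ $\sum_i(1-q_i)\ge cn$ for some $c>0$, after which the bound becomes $O(n^{-1/2})$, as claimed.

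The two points that require care, and which I expect to be the main obstacles, are the passage from Kanter's two-point window $\{0,1\}$ to the single-point maximum, and the verification that the assumption on the $q_i$ delivers what each step needs. The first is handled by the symmetric unimodality of the convolution, which pins the mode at $0$ and lets a single atom be dominated by the two-point window. The second is more delicate, since the hypothesis on $\xi$ must play two jobs at once: it must keep each $f_i$ unimodal about $0$ so that Theorem~\ref{thm:lo} applies (equivalently, that the $q_i$ are not too small), and it must force $\sum_i(1-q_i)$ to be of order $n$ so that the Kanter bound actually decays at the rate $n^{-1/2}$ (without such a lower bound on the spread, the concentration could fail to decay at all, for instance if almost all the mass of each $X_i$ concentrates at $0$).
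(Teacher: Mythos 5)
Your proof is correct and follows essentially the same route as the paper: reduce to the unweighted sum via Theorem~\ref{thm:lo} with $\alpha=\infty$, then invoke Lemma~\ref{lem:kanter}; the paper simply compresses into one line the intermediate steps you spell out (the mode of the symmetric unimodal convolution sits at $0$, and the single atom is dominated by the two-point window $\{0,1\}$). Your two caveats are also well taken: Theorem~\ref{thm:lo} does require $q_i\ge(1-q_i)/2$ so that $f_i=f_i^{*}$ is $\triangle$-regular, and the stated hypothesis $1-q_i<\xi$ must in fact be the lower bound $1-q_i>\xi$ for the paper's step $G\bigl(\sum_{i=1}^{n}(1-q_i)\bigr)\le G(\xi n)$ (and indeed for the conclusion itself) to hold, since $G$ is decreasing.
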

\begin{proof}
By Theorem~\ref{thm:lo} and Lemma \ref{lem:kanter},
\begin{align*}
Q_{\calL, n, 0}(\mathbf{a})\leq \mathbf{P}\left( X_1+\cdots + X_n =0 \right) < G\left( \sum_{i=1}^{n} (1-q_i)\right) \leq G(\xi n) = O\left( n^{-1/2} \right).
\end{align*}
\end{proof}
We remark that finding the small ball probability is originated from Kolmogorov and Rogozin. See \cite{Ess66} for further discussion and details.

\subsection{Collision entropy and counting solutions of linear equations}
\label{sec:R2}

Let $A_i\subseteq \zpz$ be finite sets in $\zpz$ for $i=1,\cdots,n$. Similar to the Littlewood-Offord problem, let us consider 
a linear equation in $\zpz$ for non-zero $c_i\in\zpz$ as follows:
\begin{align}\label{eq:collision}
c_1 x_1 + \cdots + c_nx_n = c_1 x_1' + \cdots + c_nx_n'
\end{align}
where $x_i, x_i'\in A_i$ are equation variables. Then we are able to discuss the number of solutions of the equation \eqref{eq:collision} by applying Theorem \ref{thm:main}. 
\begin{cor}
Among any choices of non-zero $c_i\in\zpz$, the number of solutions of the equation \eqref{eq:collision} is maximized when $c_1=\cdots=c_n=1$.
\end{cor}
\begin{proof}
For each $i$, let $X_i$ and $X_i'$ be i.i.d. uniform distributions on $A_i$. Similarly, 
let $Y_i$ and $Y_i'$ be i.i.d. uniform distributions on $A_i^{\delta'(A_i)}$ where $A_i^{\delta'(A_i)}$ with $\delta'(A_i)\in\{+,-\}$ such that $\bar{\delta}:=\sum_{i=1}^{n} \bar{\delta}(i,\delta'(A_i)) \in\{0,+1\}$ where $\bar{\delta}(\cdot,\delta'(\cdot))$ is defined in Section \ref{subsec:gen_rearrangement} below.
By applying Theorem \ref{thm:main} when $\alpha=2$, we have
\begin{align*}
&-\log \mathbf{P} \big\{ c_1X_1+\cdots+c_nX_n = c_1X_1'+\cdots+c_nX_n' \big\} \\
\geq &-\log \mathbf{P} \big\{ Y_1+\cdots+ Y_n = Y_1'+\cdots+Y_n' \big\}.
\end{align*}
By removing the $\log$-terms,
\begin{align*}
\mathbf{P} \big\{ c_1X_1+\cdots+c_nX_n = c_1X_1'+\cdots+c_nX_n' \big\} 
\leq \mathbf{P} \big\{ Y_1+\cdots+ Y_n = Y_1'+\cdots+Y_n' \big\}.
\end{align*}
We can reinterpret the above inequality as follows:
\begin{align*}
\frac{\text{$\#$ of solutions in the equation of \eqref{eq:collision}}}{\prod_{i}^{n}|A_i|} \leq \frac{\text{$\#$ of 
solutions in the rearranged equation}}{\prod_{i}^{n}\left| A_i^{\delta(A_i)} \right|}.
\end{align*}
Since the two denominators are the same, the proof is complete.
\end{proof}

\subsection{A discrete entropy power inequality}
\label{sec:R1}

As mentioned earlier, the entropy power inequality for random variables in a Euclidean space, originally due to \cite{Sha48, Sta59} 
and since considerably refined (see, e.g., \cite{MG17} and references therein), is very well known and has been deeply studied. 
If two independent random variables $X$ and $Y$ admit density functions $f(x)$ and $g(x)$ on $\mathbb{R}^d$, the entropy power inequality says that
\begin{align}\label{eqn:cont_epi}
\mathcal{N}(X+Y) \geq \mathcal{N}(X) + \mathcal{N}(Y) ,
\end{align}
where $\mathcal{N}(X)=\frac{1}{2\pi e}e^{\frac{2}{d}h(X)}$ and $h(X)=-\int_{\mathbb{R}^d} f(x)\log f(x) dx$,
with equality if and only if $X$ and $Y$ are Gaussian with proportional covariance matrices. We remark that 
$\mathcal{N}(\cdot)$ is called the {\it entropy power}; the name of the inequality comes from the fact that it 
expresses the superadditivity of the entropy power with respect to convolution.

Theorem \ref{thm:main} (or Corollary \ref{cor:two}) specialized to $\alpha=1$ yields a {\it discrete} entropy power inequality
of similar form for {\it uniform} distributions in $\mathbb{Z}$. 

\begin{thm}\label{thm:disc-unif-epi}
If $X$ and $Y$ are independent and uniformly distributed on finite sets $A$ and $B$ in $\mathbb{Z}$,
\begin{align*}
N(X+Y)+1 \geq N(X)+N(Y)
\end{align*}
where $N(X)=e^{2H(X)}$. The equality holds when one of $X$ and $Y$ is a dirac (one-point) mass.
\end{thm}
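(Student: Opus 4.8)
The plan is to invoke Corollary~\ref{cor:two} at $\alpha=1$ to reduce to the case where $X$ and $Y$ are uniform on intervals, and then to bound the Shannon entropy of the resulting (trapezoidal) convolution from below by its collision entropy $H_2$, which is an explicit rational function of $|A|,|B|$.

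Write $m=|A|$, $n=|B|$, and assume without loss of generality $m\le n$. Since $X$ is uniform, $H(X)=\log m$ and $N(X)=m^2$, so the claim is $N(X+Y)\ge m^2+n^2-1$. Both $f_A$ and $f_B$ are uniform, hence each is $\triangle$- or $\square$-regular and its decomposition in Theorem~\ref{thm:decomposition} is trivial; Corollary~\ref{cor:two} at $\alpha=1$ therefore yields $H(X+Y)\ge H(f_A^{\pm}\star f_B^{\mp})$, where each rearranged factor is uniform on an interval of consecutive integers. By translation invariance of entropy this bound equals $H(U_m\star U_n)$, the entropy of the convolution of two interval-uniforms, so it suffices to prove $N(U_m\star U_n)\ge m^2+n^2-1$.

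The key move is to sidestep the Shannon entropy of the trapezoid $U_m\star U_n$ (which would involve $\sum_k k\log k$) by using monotonicity of R\'enyi entropy, $H=H_1\ge H_2$. Thus $N(U_m\star U_n)=e^{2H_1}\ge e^{2H_2}=(mn)^4/\big(\sum_s c_s^2\big)^2$, where $c_s$ counts representations of $s$ and $\sum_s c_s=mn$. The counts form the trapezoid $1,2,\dots,m-1,m,\dots,m,m-1,\dots,2,1$, and a direct summation gives $\sum_s c_s^2=\tfrac13 m(3mn-m^2+1)$. Substituting reduces the entire theorem to the algebraic inequality
\[
9m^2n^4\ \ge\ (m^2+n^2-1)\,(3mn-m^2+1)^2,\qquad 1\le m\le n .
\]

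Finally I would establish this polynomial inequality. Setting $c=m^2-1\ge0$, the difference of the two sides factors as $c\,G(m,n)$ with $G(m,n)=6mn^3-(10m^2-1)n^2+6m(m^2-1)n-(m^2-1)^2$; the line $m=1$ gives equality, and for $m\ge2$ it remains to show $G\ge0$ on $n\ge m$. Treating $G$ as a cubic in $n$ with positive leading term, one finds $G'(m)=4m(m^2-1)\ge0$ while the vertex of the parabola $G'$ lies at $(10m^2-1)/(18m)<m$, so $G$ is increasing on $[m,\infty)$; since $G(m,m)=m^4-3m^2-1>0$ for $m\ge2$, the inequality follows, with equality throughout only when $m=1$. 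The main obstacle is precisely this inequality: the slack near the diagonal is minuscule ($576$ versus $567$ at $m=n=2$) and the real-variable version actually fails for non-integer $m$ slightly above $1$ with $n\approx m$ (e.g.\ $m=n=\tfrac32$ gives $G<0$), so the argument must exploit integrality through $G(m,m)>0$, which holds exactly because $m\ge2$. The reduction steps are routine once the rearrangement theorem is in hand; verifying the factorization $c\,G$ and the monotonicity of $G$ is the crux.
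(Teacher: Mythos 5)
Your proof is correct, but it takes a genuinely different route from the one the paper (implicitly) follows: the paper omits the argument and points to the deduction of \cite[Theorem II.2]{WM15:isit}, which after the same reduction to interval uniforms proceeds by working with the Shannon entropy of the trapezoidal convolution itself (via the $\#$-log-concavity framework of \cite{WWM14:isit}), i.e.\ it confronts the $\sum_k k\log k$ terms directly. Your first step coincides with the paper's intended ``alternate path'': Corollary~\ref{cor:two} at $\alpha=1$ with the observation that a uniform on a set of odd (resp.\ even) cardinality is $\triangle$- (resp.\ $\square$-) regular, so the decomposition is trivial and the lower bound is exactly $H(U_m\star U_n)$. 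Where you diverge is in replacing the Shannon-entropy computation by the monotonicity $H_1\geq H_2$, which converts the problem into the explicit polynomial inequality $9m^2n^4\geq(m^2+n^2-1)(3mn-m^2+1)^2$; I checked the representation-count formula $\sum_s c_s^2=\tfrac13 m(3mn-m^2+1)$, the factorization of the difference as $(m^2-1)G(m,n)$, the values $G'(m)=4m(m^2-1)$ and vertex location $(10m^2-1)/(18m)<m$ giving monotonicity of $G$ in $n$ on $[m,\infty)$, and $G(m,m)=m^4-3m^2-1>0$ for $m\geq2$, and all are correct. What your approach buys is a completely elementary, self-contained quantitative step with no logarithmic sums; what it costs is sharpness of the intermediate bound (the passage to $H_2$ is lossy, and as you rightly observe the resulting inequality survives only because $m$ is an integer --- it fails for real $m$ near $1$), so your remark that integrality is exploited through $G(m,m)>0$ for $m\geq 2$ is not a throwaway comment but an essential part of the verification and deserves the emphasis you give it.
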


The extra additive term 1 is akin to the extra factor of 1 that appears in the Cauchy-Davenport inequality when
compared to its continuous analogue, namely the Brunn-Minkowski inequality on $\mathbb{R}$, which says that the 
Lebesgue measure (length) of $A+B$ is at least the sum of lengths of $A$ and $B$ respectively.
Theorem~\ref{thm:disc-unif-epi} is notable since despite much work attempting to prove an entropy power inequality 
on the integers, all prior results  of similar form held only for small classes of distributions (such as binomials \cite{HV03, SDM11:isit},
supported on a set of contiguous integers),
whereas Theorem~\ref{thm:disc-unif-epi} applies to the infinite-dimensional class of uniform distributions on arbitrary finite sets.
We omit the proof of Theorem~\ref{thm:disc-unif-epi} since it is very similar to the deduction in our previous work \cite[Theorem II.2]{WM15:isit}, where we 
started from a lower bound for the entropy of the sum based on the notion of $\#$-log-concave \cite{WWM14:isit}
rather than from Theorem \ref{thm:main} (which provides an alternate path). 

We remark that the exponent $2$ in the entropy power $N(X)=e^{2H(X)}$ is the reasonable choice based on the asymptotically sharp constant by Tao \cite{Tao10}
mentioned in the introduction. Otherwise, it violates Tao's asymptotically sharp constant (see \cite{WM15:isit} for details). 
We also remark that the discrete entropy power inequality for more general distributions in $(\zpz)^d$ remains an interesting open problem.

An immediate consequence of Theorem~\ref{thm:disc-unif-epi} is an explicit bound on entropy doubling. Indeed, since
$X, X'$ are i.i.d., Theorem~\ref{thm:disc-unif-epi} says that $N(X+X') \geq 2N(X)-1$. Taking logarithms, we may write this as
$$
H(X+X')\geq \frac{1}{2}\log\big[2N(X)\big] +\log \bigg[1-\frac{1}{2N(X)}\bigg]
= H(X) + \frac{1}{2}\log 2 -o(1) , 
$$
as $N(X)\to\infty$, which recovers the asymptotic entropy gap on doubling of  \eqref{eqn:tao} from \cite{Tao10}
with significantly less effort for the special class of uniform distributions. Interestingly, however, this result (although
sharp for general distributions) can be improved for uniform distributions by a slightly more involved argument starting
from our main results. 

\begin{cor}\label{cor:asymp-unif}
If $X$ and $X'$ are independent and both uniformly distributed on a finite subset $A$ of $\mathbb{Z}$, then
\begin{align*}
H(X+X') - H(X) \geq \frac{1}{2}-o(1) ,
\end{align*}
where $o(1)$ vanishes as $H(X)\to+\infty$.
\end{cor}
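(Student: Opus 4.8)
The plan is to start not from the entropy power inequality (Theorem~\ref{thm:disc-unif-epi}), which for uniform inputs yields only the weaker constant $\frac12\log 2$, but directly from the sharp rearrangement bound of Corollary~\ref{cor:two} (the case $\alpha=1$ of Theorem~\ref{thm:main}). The point is that for uniform distributions the rearranged lower-bound distribution can be computed exactly, and it turns out to be strictly more concentrated than what the entropy power inequality predicts, so evaluating its entropy asymptotically will produce the improved constant $\frac12$.

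First I would reduce to an interval. Let $n=|A|$, so that $H(X)=\log n$. Since $X$ is uniform on $A$, all of its mass values are equal, so its rearrangement is simply the uniform distribution on a block of $n$ consecutive integers: when $n$ is odd, $f$ is $\triangle$-regular and $f^*=f^+=f^-$ is uniform on $\{-(n-1)/2,\dots,(n-1)/2\}$, while when $n$ is even, $f$ is $\square$-regular and $f^+,f^-$ are uniform on two adjacent length-$n$ blocks. In either case Corollary~\ref{cor:two} applied with $f=g$ gives
$$
H(X+X')\ \geq\ H(U\star U'),
$$
where $U,U'$ are independent and uniform on an integer interval of length $n$; the exact location and relative shift of the intervals are irrelevant, since they only translate the convolution, and translation preserves entropy. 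The next observation is that $U\star U'$ is exactly the discrete triangular law: writing its support as $\{0,1,\dots,2n-2\}$, its mass is $p_k=c_k/n^2$ with $c_k=\min(k+1,\,2n-1-k)$, so the values $1,2,\dots,n-1$ each occur twice and $n$ occurs once.

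The core computation is then asymptotic. From the shape of the $c_k$ I would write
$$
H(U\star U')=2\log n-\frac{1}{n^2}\sum_k c_k\log c_k=2\log n-\frac{1}{n^2}\Bigl(2\sum_{j=1}^{n}j\log j-n\log n\Bigr),
$$
so everything reduces to the asymptotics of $\sum_{j=1}^n j\log j$. I would invoke Euler--Maclaurin, comparing to $\int_1^n x\log x\,dx=\tfrac{n^2}{2}\log n-\tfrac{n^2}{4}+O(1)$ with the half-endpoint correction $\tfrac12 n\log n$; this yields $\sum_k c_k\log c_k=n^2\log n-\tfrac{n^2}{2}+O(\log n)$, hence $H(U\star U')=\log n+\tfrac12+o(1)$. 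Subtracting $H(X)=\log n$ then gives the claim.

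I expect the main obstacle to be the clean control of the error terms, ensuring that the subleading corrections genuinely collapse to $o(1)$ after division by $n^2$. A conceptually transparent alternative, which I would use as a sanity check, is the quantization heuristic: as $n\to\infty$ the rescaled variable $(U+U')/n$ converges to the continuous triangular law on $[0,2]$ with density $g(t)=t$ on $[0,1]$ and $g(t)=2-t$ on $[1,2]$, whose differential entropy is $-2\int_0^1 t\log t\,dt=\tfrac12$; since the lattice spacing in the rescaled variable is $1/n$, the discrete entropy exceeds the differential entropy by $\log n$, again giving $\log n+\tfrac12$. Making this heuristic rigorous requires essentially the same Euler--Maclaurin estimate, so the two routes stand or fall together.
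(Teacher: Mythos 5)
Your proposal is correct and follows essentially the same route as the paper: apply Corollary~\ref{cor:two} with $\alpha=1$ to reduce to the convolution of two uniform distributions on length-$n$ intervals (the discrete triangular law), then extract the constant $\tfrac12$ from the asymptotics of $\sum_{j\le n} j\log j$. The paper simply uses the one-sided comparison $\sum_{i=1}^{n-1} i\log i \le \int_1^n x\log x\,dx$ in place of your Euler--Maclaurin expansion, which suffices since only a lower bound on the entropy is needed.
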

\begin{proof}
Assume $|A| = n$ and let $f=\frac{1}{n}A$. Then $H(X)=\log n$. As $H(X)\to +\infty$, $n\to+\infty$. Then by choosing a sufficiently large $p\gg n$ and applying Corollary \ref{cor:two} with the direct calculation,
\begin{align*}
H(X + X') - H(X) &\geq H(f^{+} \star f^{-}) - H(f)\\
&= -2 \sum_{i=1}^{n-1} \frac{i}{n^2} \log \frac{i}{n^2}  + \frac{1}{n}\log n - \log n\\
&= \bigg( 1-\frac{1}{n} \bigg) \log n -2 \sum_{i=1}^{n-1} \frac{i}{n^2} \log i \\
&\geq \frac{1}{2} -\frac{\log n}{n} -\frac{1}{2n^2} = \frac{1}{2} - o(1).
\end{align*}
\end{proof}

Whereas the asymptotic entropy gap on doubling for general distributions is at least $\frac{1}{2}\log 2$ from \cite{Tao10}, 
Corollary~\ref{cor:asymp-unif} says that the  asymptotic entropy gap on doubling for uniform distributions
exceeds the larger constant $\frac{1}{2}$.

We note that 
$H(X+X')=H(X)=\log p$ which is the maximum entropy when $X$ is uniform on $\zpz$. Therefore, 
it is not possible to bound $H(X+X')-H(X)$ away from 0 in the case of finite cyclic groups. On the other hand, we also note that if $p$ is sufficiently large i.e. $p\gg n$, the behavior of entropy gap is the same as on the integer domain. In such case, as shown in the proof of Theorem II.2 of \cite{WM15:isit}, $H(X+X')-H(X)\geq \frac{1}{2}\log 2$ for i.i.d. uniform distributions of $X$ and $X'$. More general R\'enyi entropy case can be deduced from the proof of the Lemma 2.5 of \cite{MWW17:2}.

\begin{remark}
In fact, as hinted in the introduction, we can directly view Corollary \ref{cor:two} specialized to $\alpha=1$ 
as a discrete entropy power inequality. To see this, note that the original entropy power inequality \eqref{eqn:cont_epi}
may be restated as saying that, if  $Z_1$ and $Z_2$ are independent Gaussian random vectors with $h(X)=h(Z_1)$ and $h(Y)=h(Z_2)$,
then $h(X_1+X_2)\geq h(Z_1+Z_2)$; there is also a strengthening of this available where spherically symmetric decreasing rearrangements
of the densities are used instead of Gaussians \cite{WM13:isit, WM14}. When $\alpha=1$, Corollary \ref{cor:two} is a statement
very much like this but for the integers (there is the additional subtlety of the decomposition being invoked for one of the distributions,
but this is unavoidable as discussed in Section~\ref{sec:main}), and hence can in itself be considered as a discrete entropy power inequality.
\end{remark}

\begin{remark}
The cardinality-minimizing sumset in the integer lattice, explored by Gardner and Gronchi \cite{GG01}, is related to the
so-called $B$-order. We note that this is equivalent to our entropy minimizer in the integers when we restrict to uniform
distributions. However, we also observe that following the $B$-order cannot give an entropy minimizer of the sum 
for uniform distributions in the integer lattice in general.
\end{remark}

\subsection{The Cauchy-Davenport theorem}
\label{sec:R0}

Observe that Corollary \ref{cor:two} contains the Cauchy-Davenport theorem.

\begin{cor}[Cauchy-Davenport Theorem]
If $A$ and $B$ are non-empty finite subsets in $\zpz$, then
\begin{align*}
|A + B| \geq \min \{ |A|+|B|-1,p \}.
\end{align*}
\end{cor}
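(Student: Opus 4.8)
The plan is to derive the Cauchy-Davenport theorem as a consequence of Corollary~\ref{cor:two} specialized to the R\'enyi order $\alpha=0$, exploiting the fact that $H_0(f) = \log|\mathrm{supp}(f)|$ turns an entropy inequality into a cardinality inequality. First I would take $f$ and $g$ to be the uniform probability mass functions on $A$ and $B$ respectively, so that $\mathrm{supp}(f)=A$ and $\mathrm{supp}(g)=B$. The key observation is that the support of a convolution is exactly the sumset: $\mathrm{supp}(f\star g) = A\oplus B$, since $f\star g(k)>0$ precisely when $k=i\oplus j$ for some $i\in A$, $j\in B$. Hence $H_0(f\star g) = \log|A\oplus B|$, and the left-hand side of Corollary~\ref{cor:two} at $\alpha=0$ is exactly $\log|A\oplus B|$.

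The second step is to evaluate the right-hand side. By Corollary~\ref{cor:two} with $\alpha=0$, we have $|A\oplus B| \geq |\mathrm{supp}(f^+\star g_\triangle^- + f^-\star g_\square^+)|$, where $g=g_\triangle+g_\square$ is the decomposition from Theorem~\ref{thm:decomposition}. The heart of the argument is therefore a combinatorial computation of the support size of this explicit extremal object. Because the rearrangements $f^+,f^-$ are supported on the centered ``interval-like'' sets $\mathfrak{I}^+,\mathfrak{I}^-$ and the $\triangle$/$\square$ decomposition of $g$ splits its support into a symmetric core and pairs, the rearranged summand is uniform (up to the shape-equivalence guaranteed by Theorem~\ref{thm:main}) on a set that is essentially an arithmetic-progression-like block. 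For uniform distributions in $\mathbb{Z}$ (the $p=+\infty$ case) this support is a contiguous interval of length $|A|+|B|-1$, giving $|A\oplus B|\geq |A|+|B|-1$ directly; in $\zpz$ one must additionally cap the count at $p$, since the support cannot exceed the whole group, yielding the $\min\{|A|+|B|-1,\,p\}$ form.

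The main obstacle, and the step deserving the most care, is verifying that the support size of the extremal distribution $f^+\star g_\triangle^- + f^-\star g_\square^+$ equals $\min\{|A|+|B|-1,p\}$ rather than being merely bounded below by it. This requires checking that the two convolution terms, after the appropriate choice of $\delta$-signs that keeps them shape-equivalent to $\mathfrak{I}^+$, do not cancel and instead fill out a contiguous (modulo $p$) block whose length is exactly $|A|+|B|-1$ when this does not exceed $p$. Concretely, I would argue that each rearranged factor is supported on a centered near-interval, that the convolution of two such near-intervals of sizes $|A|$ and $|B|$ has support of size $|A|+|B|-1$ (the classical interval-sumset fact), and that the sum of the two shape-equivalent pieces shares this support without gaps; the saturation at $p$ follows because once $|A|+|B|-1\geq p$ the support is forced to be all of $\zpz$. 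I expect the bookkeeping of the $\triangle$/$\square$ split and the matching of the $+/-$ rearrangements to be the only genuinely delicate point, while the passage from the $H_0$ inequality to the cardinality statement is immediate.
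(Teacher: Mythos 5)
Your proposal follows the paper's proof essentially verbatim: uniform distributions on $A$ and $B$, Corollary~\ref{cor:two} at $\alpha=0$, and the observation that the rearranged supports are consecutive blocks whose sumset has size $\min\{|A|+|B|-1,p\}$. The one piece of ``delicate bookkeeping'' you flag dissolves once you note (as the paper does) that a uniform distribution is automatically $\triangle$-regular or $\square$-regular according to the parity of its support, so one of $g_\triangle,g_\square$ vanishes and the extremal object is a single convolution of two near-intervals; also, a lower bound (not equality) on its support size is all that is needed.
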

\begin{proof}
Choose $f=\frac{1}{|A|}1_A$ and $g=\frac{1}{|B|}1_B$ to be the uniform distributions on $A$ and $B$. Then
\begin{align*}
\text{supp}(f \star g) = A + B, \quad \text{supp}(f^+ \star g^-) = A^+ + B^-
\end{align*}
where $A^+:=\text{supp}(f^+)$ and $B^-=\text{supp}(g^-)$. Then $|A^+|=|A|$ and $|B^-|=|B|$. If $|B|$ is odd, $g$ is $\triangle$-regular (so $g^+=g^-$). If $|B|$ is even and $|A|$ is odd, we can change the role of $A$ and $B$. If both $|A|$ and $|B|$ are even, $f$ and $g$ are $\Box$-regular. In any case, Corollary \ref{cor:two} for $\alpha=0$ implies
\begin{align*}
H_0\left( f\star g \right) \geq H_0\left( f^+ \star g^- \right).
\end{align*}
Equivalently,
\begin{align*}
|A+ B| \geq |A^+ + B^-|.
\end{align*}
Since $A^+$ and $B^-$ are sets of consecutive values, $A^+ + B^-$ is again a set of consecutive values (in the sense of cyclic values) in $\zpz$. So $|A^+ + B^-|=|A^+|+|B^-|-1 = |A|+|B|-1$ in $\zpz$ when $|A|+|B|\leq p+1$. Otherwise, $|A^+ + B^-|=p$ since the domain is already full. Therefore, $|A^+ + B^-|=\min \{|A|+|B|-1,p\}$ in $\zpz$ recovers the Cauchy-Davenport theorem.
\end{proof}

We note that this last implication is more or less redundant, since our main theorem is heavily based on Lev's 
set rearrangement majorization (Lemma \ref{lem:lev_set} below), which in turn is originally based on the generalized 
Cauchy-Davenport theorem proved by Pollard \cite{Pol75}; nonetheless it is included for completeness. 

In the Euclidean setting of $\mathbb{R}^d$, the general R\'enyi entropy power inequalities of \cite{WM14} specialize
both to the Shannon-Stam entropy power inequality as well as to the Brunn-Minkowski inequality. Our development above,
where we use a general R\'enyi entropy power inequality on $\zpz$ to obtain both a discrete entropy power inequality
and the Cauchy-Davenport theorem, is exactly the discrete analogue of this phenomenon but is limited to one dimension.
Given that there are some multidimensional versions of sumset inequalities known (see, e.g., \cite{GG01}), it is tempting to
hope that the results of this note can be generalized in an appropriate way to distributions on $(\zpz)^d$. However,
one should temper this hope with the observation that the analogy between sumset inequalities and entropy inequalities
in the Euclidean setting breaks down in some unexpected ways when $d>1$, as discussed extensively in \cite{FMMZ16, FMMZ17}.

\section{Proof of Theorem~\ref{thm:decomposition}}\label{sec:prop}

Given a fixed $\mathfrak{I}=\left\{\left( i_1,\cdots, i_p \right)\right\}\subseteq \mathfrak{I}(f)$ satisfying $f(i_1)\geq f(i_2)\geq\cdots\geq f(i_p)$, we write $f^{[r]} = f(i_r)$ for abbreviation. i.e. $f^{[r]}$ indicates the $r$-th largest value of $f$ and it is achieved at $i_r\in\zpz$.

Firstly, we show the uniqueness. 
Suppose that there exist $f_\triangle, f_\square, f'_\triangle$, and $f'_{\square}$ such that all are $\mathfrak{I}$-compatible and
\begin{align*}
f=f_\triangle+f_{\square}=f'_{\triangle}+f'_{\square}.
\end{align*}
This implies $f_{\triangle}-f'_{\triangle}=f'_{\square}-f_{\square}$. By the shape-compatibility, for any $i_r$ from the tuple in $\mathfrak{I}$
\begin{align*}
\left(f_{\triangle}-f'_{\triangle} \right)(i_r)&=f^{[r]}_{\triangle}-f'^{[r]}_{\triangle},\\
\left( f_{\square}-f'_{\square} \right)(i_r)&=f^{[r]}_{\square}-f'^{[r]}_{\square}.
\end{align*}  
By the construction, $|\text{supp}(f'_{\square}-f_{\square})|$ should be an even number or zero. If $|\text{supp}(f'_{\square}-f_{\square})|$ is zero, we are done since $f'_{\square}-f_{\square}=0$. Suppose that $|\text{supp}(f'_{\square}-f_{\square})|$ is non-zero and even. Since there exist same values in pairs for $f_{\triangle}$ and $f'_{\triangle}$ except the largest one, $\left(f_{\triangle}-f'_{\triangle} \right)(i_1)=f^{[1]}_{\triangle}-f'^{[1]}_{\triangle}=0$ to have the even support size. This again implies $\left( f_{\square}-f'_{\square} \right)(i_1)=f^{[1]}_{\square}-f'^{[1]}_{\square}=0$. Then since there exist same values in pairs for $f_{\square}$ and $f'_{\square}$, $f^{[2]}_{\square}-f'^{[2]}_{\square}=0$. By repeating this process iteratively, we have $f^{[r]}_{\triangle}-f'^{[r]}_{\triangle}=0$ and $f^{[r]}_{\square}-f'^{[r]}_{\square}=0$ for all $r=1,\cdots,p$. Then it contradicts to the assumption that $|\text{supp}(f'_{\square}-f_{\square})|$ is non-zero. 

Secondly, we show the decomposition. To describe an indicator (or characteristic) function of a set conveniently, we mean that $f=aA$ is equivalent to $f(i)=a$ at $i\in A\subseteq \zpz$ for some $a\in\mathbb{R}$, and $f(i)=0$ otherwise. Let $a_r = f^{[r]}-f^{[r+1]}$ for $r=1,\cdots,p-1$ and $a_p=f^{[p]}$. Choose
\begin{align*}
f_{\triangle} &= a_1 \{i_1\} + a_3 \{i_1, i_2, i_3\} + \cdots + a_{p}\{i_1,\cdots, i_p \}, \\
f_{\square} &= a_2 \{i_1,i_2\} + a_4 \{i_1, i_2, i_3 ,i_4 \} + \cdots + a_{p-1} \{i_1,\cdots, i_{p-1}\}.
\end{align*}
Then we can easily check that $f_{\triangle}$ and $f_{\square}$ satisfy the desired properties.

\section{Proof of Theorem \ref{thm:main}}\label{sec:mainthm}

Proving Theorem \ref{thm:main} requires several different results seemingly unrelevant. The structure of the proof is the following. Firstly, we introduce the notion of majorization as a function ordering and some useful results. Secondly, we generalize the Lev-Hardy-Littlewood-P\'olya's rearrangement inequality. Thirdly, we discuss the shape of the rearrangement convolutions. Fourthly, we finish the proof by connecting the rearrangement inequality and the majorization. Finally, we explain how to control rearrangements in the statement of Theorem \ref{thm:main}.

\subsection{Function ordering by majorization}
Consider two real-valued functions $f$ and $g$ in $\zpz$. We write $f \prec g$ if for all $r=1,\cdots,p-1$
\begin{align*}
\sum_{i=1}^r f^{[i]} \leq \sum_{i=1}^{r}g^{[i]},
\end{align*}
and
\begin{align}\label{eqn:maj_equality}
\sum_{i=1}^{p} f^{[i]} = \sum_{i=1}^{p} g^{[i]}.
\end{align}
In this case, we call that $f$ is majorized by $g$. 
It is classical that the majorization of finitely many supported functions imply the inequality of the convex functional sums \cite{HLP88:book, Arn12:book, MOA11:book}. 
\begin{lem}\cite[Proposition 3-C.1]{MOA11:book}\label{lem:maj_to_convex}
Assume that $f$ and $g$ are non-negative functions in $\zpz$ and $f\prec g$. For any convex function $\Phi:\mathbb{R}\to\mathbb{R}$,
\begin{align*}
\sum_{i\in\zpz}\Phi\left( f(i) \right) \leq \sum_{i\in\zpz}\Phi\left( g(i) \right).
\end{align*}
\end{lem}

\subsection{Generalized rearrangement inequality in $\zpz$}\label{subsec:gen_rearrangement}
Lev \cite{Lev01} extended the Hardy-Littlewood-P\'olya's rearrangement inequality \cite{HLP88:book} based on Pollard's generalized Cauchy-Davenport theorem \cite{Pol75}.
\begin{thm}[Lev-Hardy-Littlewood-P\'olya's rearrangement inequality]
Let $f$ and $g$ be arbitrary nonnegative functions in $\zpz$. Assume $h_1,h_2,\cdots,h_n$ are $\triangle$-regular in $\zpz$,
\begin{align*}
f\star g\star h_1\star h_2 \star \cdots \star h_n (0) \leq f^+ \star g^-\star h_1^* \star h_2^* \star \cdots \star h_n^* (0).
\end{align*}
\end{thm}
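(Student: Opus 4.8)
The plan is to reduce the inequality to a purely combinatorial statement about sumsets in $\zpz$ and then invoke Pollard's generalized Cauchy--Davenport theorem \cite{Pol75}, following the strategy of Lev \cite{Lev01}. First I would record the structural fact that the convolution $H := h_1^* \star \cdots \star h_n^* = h_1 \star \cdots \star h_n$ of $\triangle$-regular functions is again $\triangle$-regular. Writing each $h_i$ through its layer-cake (superlevel set) decomposition expresses $H$ as a nonnegative superposition of convolutions $\mathbf{1}_C \star \mathbf{1}_D$ of symmetric intervals $C=\{-c,\dots,c\}$ and $D=\{-d,\dots,d\}$; a direct count shows that $\mathbf{1}_C \star \mathbf{1}_D(x)$ counts pairs $(c,d)$ with $c\oplus d = x$ and is symmetric and non-increasing in the cyclic distance $\min\{|x|,p-|x|\}$ (the wrap-around re-overlap occurs precisely at the antipode, which keeps the monotonicity intact), hence $\triangle$-regular; and a sum of $\triangle$-regular functions is $\triangle$-regular. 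Using $H(-x)=H(x)$, the value of the $(n+2)$-fold convolution at $0$ becomes an inner product, so the claim is equivalent to $\langle f \star g, H\rangle \le \langle f^+ \star g^-, H\rangle$.

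Next I would reduce the two general factors $f,g$ to indicator functions by the layer-cake decomposition once more. Since rearrangement commutes with taking superlevel sets---the superlevel sets of $f^+$ are exactly the $+$-rearrangements of those of $f$, and likewise for $g^-$---bilinearity of $\star$ and linearity of $\langle\cdot,H\rangle$ reduce everything to the set-level inequality $\langle \mathbf{1}_A \star \mathbf{1}_B, H\rangle \le \langle \mathbf{1}_{A^+}\star\mathbf{1}_{B^-}, H\rangle$ for arbitrary finite $A,B$. The combinatorial heart is that the representation function $r_{A,B}:=\mathbf{1}_A \star \mathbf{1}_B$ is \emph{majorized} by $r_{A^+,B^-}:=\mathbf{1}_{A^+}\star\mathbf{1}_{B^-}$. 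This is precisely what Pollard's theorem delivers: its lower bounds on the truncated sums $\sum_x \min\{r_{A,B}(x),t\}$ are equivalent (since $r_{A,B}$ and $r_{A^+,B^-}$ share the total mass $|A|\,|B|$) to the majorization $r_{A,B}\prec r_{A^+,B^-}$, and the bound is attained with equality by the interval sets $A^+,B^-$. The $+/-$ pairing is essential here: pairing $f^+$ with $g^-$ (rather than $g^+$) makes the offsets of the two arrangements complementary, so that $r_{A^+,B^-}$ is centered at $0$ and is itself $\triangle$-regular, whereas $r_{A^+,B^+}$ would be symmetric about $\tfrac12$ instead.

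With these two ingredients in hand, I would close the set-level inequality in two steps. Since $H$ is $\triangle$-regular (symmetric decreasing) and the inner product of two sequences is largest when they are sorted alike, the rearrangement inequality gives $\langle r_{A,B}, H\rangle \le \langle (r_{A,B})^*, H\rangle$, where $(r_{A,B})^*$ is the $\triangle$-regular rearrangement of the values of $r_{A,B}$. The value-multiset is unchanged by rearranging, so $(r_{A,B})^* \prec r_{A^+,B^-}$ as well, and now $(r_{A,B})^*$, $r_{A^+,B^-}$, and $H$ are all non-increasing in the common order $\mathfrak{I}^+$. Reading all three in this order, an Abel summation against the non-increasing weight $H$ (in the same spirit as Proposition~\ref{prop:maj_to_convex}) yields $\langle (r_{A,B})^*, H\rangle \le \langle r_{A^+,B^-}, H\rangle$. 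Chaining the two bounds gives the set-level inequality; integrating over the thresholds and reassembling $\int \mathbf{1}_{A_s^+}\,ds = f^+$ and $\int \mathbf{1}_{B_t^-}\,dt = g^-$ recovers $\langle f \star g, H\rangle \le \langle f^+ \star g^-, H\rangle$. For $p=+\infty$ the same scheme applies, with the classical Hardy--Littlewood--P\'olya theory \cite{HLP88:book} replacing Pollard's theorem and no wrap-around to worry about.

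I expect the main obstacle to be the passage from Pollard's truncated-sum inequality to the clean majorization statement, together with the verification that the interval representation function $r_{A^+,B^-}$ is simultaneously (i) the extremizer attaining Pollard's bound and (ii) $\triangle$-regular and hence co-sorted with $H$. These two properties are exactly what let the rearrangement step and the majorization step telescope onto the target $\langle f^+ \star g^-, H\rangle$ rather than onto some weaker quantity; getting the $+/-$ bookkeeping right so that both hold at once is the delicate point. A secondary technical nuisance is confirming, in the cyclic setting, that convolutions of symmetric decreasing functions stay $\triangle$-regular despite wrap-around, which is why the antipodal monotonicity count is carried out at the level of intervals before integrating.
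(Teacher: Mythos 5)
There is a genuine gap at the very first step. You set $H := h_1^* \star \cdots \star h_n^* = h_1 \star \cdots \star h_n$, but this equality is false: $\triangle$-regularity of $h_i$ is a condition on the multiset of values (namely $h_i^+ = h_i^-$), not on where those values sit, so $h_i$ need not equal $h_i^*$ and convolution does not commute with rearrangement. For a concrete failure, take $p=5$ and $h=\tfrac13 \mathbf{1}_{\{0,1,2\}}$, which is $\triangle$-regular with $h^*=\tfrac13\mathbf{1}_{\{-1,0,1\}}$; then $h\star h$ and $h^*\star h^*$ are different functions (e.g.\ $h\star h(0)=\tfrac19$ while $h^*\star h^*(0)=\tfrac39$). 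Consequently your reduction of the left-hand side to $\langle f\star g, H\rangle$ with the \emph{same} symmetric-decreasing kernel $H$ that appears on the right only proves the special case in which each $h_i$ is already in its rearranged position $h_i^*$ — but the whole content of the theorem is that the $h_i$ on the left are \emph{not} rearranged. The same problem persists after your layer-cake step: the superlevel sets of a $\triangle$-regular $h_i$ are odd-sized but are generally not symmetric intervals, so the left-hand kernel $\mathbf{1}_{C_1}\star\cdots\star\mathbf{1}_{C_n}$ is not symmetric decreasing and your sorted-inner-product argument cannot be applied to it without first rearranging the $C_i$ — which is exactly the statement you are trying to prove.

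The part of your argument that treats $f$ and $g$ is sound and close in spirit to what is needed (superlevel decomposition, Pollard-type majorization $\mathbf{1}_A\star\mathbf{1}_B \prec \mathbf{1}_{A^+}\star\mathbf{1}_{B^-}$, Abel summation against a nonincreasing weight), but the repair requires decomposing \emph{all} $n+2$ functions into indicators and invoking the full multi-set rearrangement majorization of Lemma~\ref{lem:lev_set}, i.e.\ the inequality
$\mathbf{1}_A\star\mathbf{1}_B\star\mathbf{1}_{C_1}\star\cdots\star\mathbf{1}_{C_n}(0) \le \mathbf{1}_{A^+}\star\mathbf{1}_{B^-}\star\mathbf{1}_{C_1^*}\star\cdots\star\mathbf{1}_{C_n^*}(0)$
for arbitrary $A,B$ and odd-sized $C_i$, which Lev obtains from Pollard's two-set theorem by an induction over the number of factors; the value at $0$ then follows from the $r=1$ case of the majorization since the rearranged convolution attains its maximum at $0$. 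This is precisely the route taken for the generalization in Theorem~\ref{thm:general_rearrangement_inequality} (and is why the present statement is simply attributed to Lev). As written, your proposal does not contain the inductive multi-factor step, and that step cannot be bypassed by the fixed-kernel inner-product device.
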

As explained in the second part of Section \ref{sec:prop}, we use the set-indicator function ambiguation again. In other words, we admit that a set is equivalently an indicator function. For any finite set $A\subseteq\zpz$, $A(i)=1$ if $i\in A$ and $A(i)=0$ if $i\notin A$. Let $\delta'(A)\in \{+,-\}$. We also define a function $\bar{\delta}(A,\delta'(A))$ by
\begin{displaymath}
\bar{\delta}(A,\delta'(A)) =  \left \{ 
\begin{array}{ll}
-1 & \text{if $|A|$ is even and $\delta'(A)=-$,}\\
0 & \text{if $|A|$ is odd,}\\
+1 & \text{if $|A|$ is even and $\delta'(A)=+$.}
\end{array}
\right.
\end{displaymath}
We note that $\bar{\delta}(\cdot,\cdot)$ captures the balanced amount from the center with respect to the convolution. Then the following lemma is proved by Lev as well \cite[Theorem 1]{Lev01}.
\begin{lem}[Lev's set rearrangement majorization]\label{lem:lev_set}
If $A_1,A_2,\cdots,A_n$ are finite sets (or indicator functions) in $\zpz$,
\begin{align*}
{A_1} \star A_2 \star \cdots  \star A_n \prec A_1^{\delta'(A_1)} \star A_2^{\delta'(A_2)} \star \cdots  \star A_n^{\delta'(A_n)}
\end{align*}
where $\delta'(A_i)\in\{+,-\}$ such that $\bar{\delta}:=\bar{\delta}(A_1, \delta'(A_1))+\bar{\delta}(A_2, \delta'(A_2))+\cdots + \bar{\delta}(A_n, \delta'(A_n))\in \{-1,0,1\}$.

Furthermore,
\begin{displaymath}
\mathfrak{I}\left( A_1^{\delta'(A_1)} \star A_2^{\delta'(A_2)} \star \cdots  \star A_n^{\delta'(A_n)} \right) \supseteq \left \{
\begin{array}{ll}
\mathfrak{I}^- & \text{if $\bar{\delta}=-1$}\\
\mathfrak{I}^* & \text{if $\bar{\delta}=0$}\\
\mathfrak{I}^+ & \text{if $\bar{\delta}=+1$}.
\end{array}
\right.
\end{displaymath}
\end{lem}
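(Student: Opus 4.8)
The plan is to prove the two assertions separately: the majorization itself, and the identification of the index set of the rearranged convolution. Throughout, write $F = A_1 \star \cdots \star A_n$ and $G = A_1^{\delta(A_1)} \star \cdots \star A_n^{\delta(A_n)}$. Both are nonnegative integer-valued functions on $\zpz$ (convolving indicators counts representations), and they share the same total mass $\sum_x F(x) = \sum_x G(x) = \prod_{i=1}^n |A_i|$ since rearrangement preserves cardinalities. In particular the equality condition \eqref{eqn:maj_equality} is automatic, so it suffices to verify the partial-sum inequalities defining $F \prec G$. The first move is to reduce these to a family of level-set inequalities: for an integer-valued nonnegative $F$ put $n_\ell(F) = |\{x : F(x) \ge \ell\}|$, so that $\sum_{\ell=1}^t n_\ell(F) = \sum_x \min(F(x), t)$, and observe that the sorted values $(F^{[i]})_i$ and the level counts $(n_\ell(F))_\ell$ are conjugate partitions. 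Since conjugation reverses the dominance order, $F \prec G$ is equivalent to the assertion that $\sum_x \min(F(x), t) \ge \sum_x \min(G(x), t)$ for every integer $t \ge 1$. (The forward direction follows from Proposition~\ref{prop:maj_to_convex} applied to $\Phi(u) = (u-t)_+$; the converse, which is the one I use to conclude majorization from the level bound, is the conjugate-partition reversal.) Everything then rests on showing that intervals minimize the truncated mass $\sum_x \min(\cdot, t)$ at every level.

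The combinatorial engine for this is Pollard's generalized Cauchy--Davenport theorem \cite{Pol75}, which for two sets $A, B$ and $1 \le t \le \min(|A|,|B|)$ gives $\sum_x \min(\mathbf{1}_A \star \mathbf{1}_B(x), t) \ge t\,\min(|A|+|B|-t, p)$. A direct computation shows that the convolution of two centered intervals is the symmetric trapezoid attaining this bound with equality: in the no-wraparound regime its level-$\ell$ set has size $|A|+|B|-2\ell+1$, and $\sum_{\ell=1}^t (|A|+|B|-2\ell+1) = t(|A|+|B|-t)$, so intervals are exactly extremal for $n=2$. For general $n$ I would induct, writing $F = (A_1 \star \cdots \star A_{n-1}) \star \mathbf{1}_{A_n}$ and comparing against $G = G_{n-1} \star \mathbf{1}_{A_n^{\delta(A_n)}}$, where $G_{n-1}$ is the $(n-1)$-fold interval convolution; the inductive hypothesis supplies $A_1 \star \cdots \star A_{n-1} \prec G_{n-1}$, and the task is to propagate the truncated-mass inequality through one further convolution against an interval.

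The identification of $\mathfrak{I}(G)$ is the cleaner half. Each $A_i^{\delta(A_i)}$ is a block of consecutive residues whose center of mass sits at $\bar{\delta}(A_i, \delta(A_i))/2 \in \{-\tfrac12, 0, +\tfrac12\}$: odd cardinality forces a genuinely symmetric block centered at $0$, while an even block is centered at $\pm\tfrac12$ according to the sign $\delta(A_i)$. Since a convolution of symmetric and unimodal functions is again symmetric and unimodal, and centers of mass add, $G$ is symmetric and unimodal about $\bar{\delta}/2 = \sum_i \bar{\delta}(A_i,\delta(A_i))/2$. When $\bar{\delta}=0$ this is symmetry about $0$, giving $\mathfrak{I}(G) = \mathfrak{I}^*$; when $\bar{\delta} = \pm 1$ it is symmetry about $\pm\tfrac12$, so the two central values coincide and decrease outward, which is precisely $\mathfrak{I}^{\pm}$. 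The hypothesis $\bar{\delta} \in \{-1,0,1\}$ is exactly what keeps this center inside $\{-\tfrac12, 0, +\tfrac12\}$, where a clean symmetric-unimodal description (and hence a single definite index set) exists.

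I expect the main obstacle to be the inductive step of the truncated-mass inequality: the partial convolution $A_1 \star \cdots \star A_{n-1}$ is a general nonnegative multiplicity function rather than an indicator, so the two-set form of Pollard does not apply verbatim. Making the induction rigorous will require either a weighted, level-decomposed version of Pollard's inequality --- splitting $A_1 \star \cdots \star A_{n-1}$ into its super-level indicator sets $\{F_{n-1} \ge \ell\}$ and controlling the convolution of each with $\mathbf{1}_{A_n}$ against the interval model --- or an equivalent compression argument showing that intervals remain extremal under one more convolution, together with careful bookkeeping of the cyclic wraparound truncations by $p$ in Pollard's bound. The symmetric-unimodality of the interval convolution $G_{n-1}$ established in the index-set part is the structural fact I would lean on to drive this propagation.
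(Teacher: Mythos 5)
You should first be aware that the paper does not prove this lemma at all: it is quoted directly from Lev \cite[Theorem 1]{Lev01} (the paper even remarks that its main theorem is ``heavily based'' on this external result, which in turn rests on Pollard's theorem \cite{Pol75}). So your proposal is effectively an attempt to reprove Lev's theorem, and it does follow the historically correct route. The reduction of $F\prec G$ to the truncated-mass inequalities $\sum_x\min(F(x),t)\geq\sum_x\min(G(x),t)$ via conjugate partitions is valid (both functions are integer-valued with common total mass $\prod_i|A_i|$, so the equality condition is automatic and conjugation of the two value-partitions reverses dominance); the two-set case via Pollard's bound $\sum_x\min(\mathbf{1}_A\star\mathbf{1}_B(x),t)\geq t\min(|A|+|B|-t,p)$ together with the trapezoid computation showing intervals attain it is sound; and the identification of $\mathfrak{I}(G)$ is essentially right, although ``centers of mass add'' should be replaced by tracking the endpoints of the intervals $A_i^{\delta(A_i)}$ modulo $p$, since a center of mass is not well defined on $\zpz$ and one must check that the wraparound truncation of the $n$-fold interval convolution preserves the $\mathfrak{I}^{+}/\mathfrak{I}^{-}/\mathfrak{I}^{*}$ ordering rather than merely cap values at a constant.

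The genuine gap is exactly where you flag it: the inductive step for $n\geq 3$. Pollard's theorem is a statement about two indicator functions, and once $F_{n-1}=A_1\star\cdots\star A_{n-1}$ is a general multiplicity function there is nothing to invoke: knowing $F_{n-1}\prec G_{n-1}$ does not by itself control $\sum_x\min\bigl(F_{n-1}\star\mathbf{1}_{A_n}(x),t\bigr)$, and the superlevel sets of $F_{n-1}$ need not be intervals, so the level-decomposition you sketch does not recombine into the interval model without further input. This $n$-fold propagation is the actual substance of Lev's theorem; it is genuinely delicate (the related problem of extending Pollard's theorem itself to $n$ summands was only partially resolved much later \cite{Gry10}), and Lev handles it with a dedicated inductive argument tailored to comparison against the rearranged configuration rather than against a closed-form bound. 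As written, your proposal establishes the lemma only for $n=2$, plus the shape identification; the missing induction is not a routine verification and must be supplied (or replaced by a compression argument) before the proof is complete.
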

We note that $A_1+ A_2$ and $A_1 \star A_2$ result in different (set) functions in general. $\text{supp}(A_1\star A_2)=A_1+ A_2$ and $A_1 \star A_2$ counts the multiplicity of the sum in a way of the convolution.

\begin{thm}[Generalized rearrangement inequality]\label{thm:general_rearrangement_inequality}
Let $f$ and $g$ be arbitrary nonnegative functions in $\zpz$. Assume $h_1,h_2,\cdots,h_n$ are $\triangle$-regular and $s_1,s_2,\cdots, $ $s_{2m-1},s_{2m}$ are $\square$-regular in $\zpz$,
\begin{align*}
f\star g\star &h_1\star h_2 \star \cdots \star h_n \star s_1 \star s_2 \star \cdots\star  s_{2m-1} \star s_{2m} (0) \\
&\leq f^+ \star g^-\star h_1^* \star h_2^* \star \cdots \star h_n^* \star s_1^+ \star s_2^- \star \cdots \star s_{2m-1}^+ \star s_{2m}^- (0).
\end{align*}
\end{thm}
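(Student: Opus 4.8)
The plan is to reduce the pointwise inequality to the purely set-theoretic majorization already established in Lemma~\ref{lem:lev_set}, by decomposing each function into its superlevel sets (a layer-cake decomposition) and exploiting the multilinearity of $\star$ evaluated at $0$. First I would write each of $f, g, h_1,\dots,h_n, s_1,\dots,s_{2m}$ in the form $F=\sum_r a_r\, A_r$, where the $A_r$ are the nested top-$r$ superlevel sets relative to a fixed index ordering and $a_r=F^{[r]}-F^{[r+1]}\ge 0$ (this is exactly the indicator decomposition used in the proof of Theorem~\ref{thm:decomposition}). The structural point I would emphasize is the parity of these level sets: for a $\triangle$-regular $h_i$ every $A_r$ has odd cardinality, for a $\square$-regular $s_j$ every $A_r$ has even cardinality, while for the arbitrary $f$ and $g$ the cardinalities are unconstrained. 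Moreover, since $F^+$ (resp. $F^-$, $F^*$) is a value-preserving rearrangement of $F$, its layer-cake decomposition uses the \emph{same} coefficients $a_r$ with each level set $A_r$ replaced by $(A_r)^+$ (resp. $(A_r)^-$, $(A_r)^*$).

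Next, by multilinearity of $\star$ evaluated at $0$, both sides of the claimed inequality expand into sums, indexed by a choice of one layer from each factor, of the nonnegative coefficient products times a value $(A^{(1)}\star\cdots\star A^{(N)})(0)$, where $A^{(l)}$ is a superlevel set of the $l$-th factor and, on the right-hand side, its prescribed rearrangement $B^{(l)}$. Because the layer indices are matched identically on the two sides, it suffices to prove, layer by layer, that $(A^{(1)}\star\cdots\star A^{(N)})(0)\le (B^{(1)}\star\cdots\star B^{(N)})(0)$, where $B^{(l)}$ is $+$ for the $f$-layer, $-$ for the $g$-layer, $*$ for each $h$-layer, and alternately $+$ and $-$ for the $s$-layers; summing these weighted by the nonnegative coefficient products then recovers the theorem.

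For a single layer choice I would invoke Lemma~\ref{lem:lev_set} with exactly these fixed signs $\delta$. The key verification is that the total balance lands in $\{-1,0,1\}$: each odd $h$-level set contributes $0$, each pair $(s_{2k-1},s_{2k})$ of even sets contributes $(+1)+(-1)=0$, so the sum reduces to $\bar{\delta}(A_f,+)+\bar{\delta}(A_g,-)\in\{-1,0,1\}$ regardless of the parities of the $f$- and $g$-level sets. Lemma~\ref{lem:lev_set} then gives $A^{(1)}\star\cdots\star A^{(N)}\prec B^{(1)}\star\cdots\star B^{(N)}$ together with the identification of the index set of the right-hand convolution as $\mathfrak{I}^-$, $\mathfrak{I}^*$, or $\mathfrak{I}^+$ according to $\bar{\delta}$. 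Since $0$ is the top index of each of these three ordered sets, the value of $B^{(1)}\star\cdots\star B^{(N)}$ at $0$ equals its maximum; combined with the first ($r=1$) majorization inequality this yields $(A^{(1)}\star\cdots\star A^{(N)})(0)\le (A^{(1)}\star\cdots\star A^{(N)})^{[1]}\le (B^{(1)}\star\cdots\star B^{(N)})^{[1]}=(B^{(1)}\star\cdots\star B^{(N)})(0)$. The case $p=+\infty$ follows by the truncation to the $n$ largest values used in Proposition~\ref{prop:maj_to_convex}.

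The step I expect to be the main obstacle is the bookkeeping that makes the fixed rearrangement pattern $(f^+,g^-,h^*,s^+,s^-,\dots)$ compatible with Lemma~\ref{lem:lev_set} uniformly over \emph{all} layer choices: one must check that the cancellation within each $\square$-regular pair and the neutrality of the $\triangle$-regular factors force $\bar{\delta}\in\{-1,0,1\}$ for every layer, and that the passage from full majorization to the pointwise statement at $0$ is legitimate, which hinges precisely on $0$ being the maximizing index of the rearranged convolution. Everything else is routine multilinearity and nonnegativity.
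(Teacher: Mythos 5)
Your proposal is correct and follows essentially the same route as the paper: a layer-cake (superlevel-set) decomposition of each factor, the parity observation that $\triangle$-regular functions yield odd-sized level sets and $\square$-regular ones yield even-sized level sets, multilinearity of $\star$ at $0$, and a termwise appeal to Lemma~\ref{lem:lev_set} with the balance check $\bar{\delta}\in\{-1,0,1\}$. In fact you supply slightly more detail than the paper's sketch, in particular the explicit derivation of the pointwise inequality at $0$ from the $r=1$ majorization together with the fact that $0$ heads each of $\mathfrak{I}^{+}$, $\mathfrak{I}^{-}$, $\mathfrak{I}^{*}$.
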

\begin{proof}[Sketch of the proof]
The proof idea is the same as Hardy-Littlewood-P\'olya's one, or Lev's one. See \cite[Theorem 373]{HLP88:book} or \cite[Theorem 2]{Lev01} for details. Similar to the decomposition explained in the second part of the Section \ref{sec:prop}, we apply the set-function decomposition of each function. i.e. $f=\sum a_r A_r$ where $a_r=f^{[r]}-f^{[r+1]}$ for $r=1,\cdots,p-1$ and $a_p=f^{[p]}$, $A_r=\{i_1,\cdots,i_r\}$ for $\left( i_1,\cdots,i_p\right)\in \mathfrak{I}(f)$. Then the rearrangement inequality can be reduced to prove the set rearrangement inequality. The key step for the reduction is using the following facts:
\begin{itemize}
\item{For non-zero $\triangle$-regular $h$, its set-function decomposition always involves odd sized sets,}
\item{For non-zero $\square$-regular $s$, its set-function decomposition always involves even sized sets.}
\end{itemize}
Using the reduction, the proof can be finished by recovering the rearrangement inequality with summing up the reduced set rearrangement inequalities. Then it remains to prove the following the set rearrangement inequality which is a special case of Lemma \ref{lem:lev_set}.
\begin{lem}
Assume that $A_f$ and $B_g$ are arbitrary finite sets, $C_{h_1},C_{h_2},\cdots,C_{h_n}$ are odd-sized sets and $D_{s_1},D_{s_2},\cdots,D_{s_{2m-1}},D_{s_{2m}}$ are even-sized sets. Then
\begin{align*}
A_f\star B_g\star &C_{h_1}\star C_{h_2} \star \cdots \star C_{h_n} \star D_{s_1} \star D_{s_2} \star \cdots\star D_{s_{2m-1}} \star D_{s_{2m}} (0) \\
&\leq A_{f}^+ \star B_{g}^-\star C_{h_1}^* \star C_{h_2}^* \star \cdots \star C_{h_n}^* \star D_{s_1}^+ \star D_{s_2}^- \star \cdots \star D_{s_{2m-1}}^+ \star D_{s_{2m}}^- (0).
\end{align*}
\end{lem}
\end{proof}

\subsection{Shape of rearrangement convolutions}
Lev's set rearrangement majorization Lemma \ref{lem:lev_set} is not only useful to prove the generalized rearrangement inequality, but also to show the shape of the rearrangement convolutions. One can easily show the following Lemma.
\begin{lem}\label{lem:shape}
If $f$ and $g$ are shape-compatible, and $a$ and $b$ are nonnegative real-values, all $f$ $g$, and $af+bg$ are again shape-compatible.
\end{lem}
\begin{prop}\label{prop:shape}
Assume $f$ is an arbitrary function. If probability mass functions $h$ is $\triangle$-regular, and $s_1$ and $s_2$ are $\square$-regular,
\begin{align*}
\mathfrak{I}^+ &\subseteq \mathfrak{I}\left( f^+ \star h^* \right),\\
\mathfrak{I}^- &\subseteq \mathfrak{I}\left( f^- \star h^* \right),\\
\mathfrak{I}^* &\subseteq \mathfrak{I}\left( s_1^+ \star s_2^- \right).
\end{align*}
\end{prop}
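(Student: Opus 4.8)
The plan is to reduce each convolution to a nonnegative combination of set–convolutions, read off the shape of each set–convolution from the ``Furthermore'' clause of Lev's set rearrangement majorization (Lemma~\ref{lem:lev_set}), and then glue the pieces together using Lemma~\ref{lem:shape}. First I would invoke the set–function decomposition used in the proof of Theorem~\ref{thm:general_rearrangement_inequality}: writing $f^+=\sum_r a_r A_r$ with nonnegative coefficients $a_r$ and $A_r=\{i_1,\dots,i_r\}$ the initial segment of $\mathfrak{I}^+$, each $A_r$ is itself a $+$-rearranged set, so $\delta(A_r)=+$; likewise the $\triangle$-regularity of $h$ gives $h^*=\sum_s b_s C_s$ where each $C_s$ is a symmetric (hence $\triangle$-regular) set of odd cardinality, and the $\square$-regularity of $s_1,s_2$ gives $s_1^+=\sum_k c_k D_k$ and $s_2^-=\sum_\ell d_\ell E_\ell$ where the $D_k$ are $+$-rearranged and the $E_\ell$ are $-$-rearranged sets, all of even cardinality.

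For the first identity I would expand $f^+\star h^*=\sum_{r,s} a_r b_s\,(A_r\star C_s)$ and apply Lemma~\ref{lem:lev_set} to each factor $A_r\star C_s$. Since $C_s$ is odd-sized we have $\bar{\delta}(C_s,\cdot)=0$, so the total index is $\bar{\delta}=\bar{\delta}(A_r,+)$, which equals $0$ when $|A_r|=r$ is odd and $+1$ when $r$ is even. Hence $\mathfrak{I}(A_r\star C_s)$ is $\mathfrak{I}^*$ in the first case and $\mathfrak{I}^+$ in the second. The key observation is that $\mathfrak{I}^*$ is compatible with $\mathfrak{I}^+$: a $\triangle$-regular function satisfies $g(0)\ge g(+1)\ge g(-1)\ge\cdots$ (with the equalities $g(+z)=g(-z)$), so $\mathfrak{I}^+$ is a valid index ordering for it. Thus every summand admits $\mathfrak{I}^+$ as an index ordering, i.e.\ all summands are shape-equivalent, and a repeated application of Lemma~\ref{lem:shape} to this nonnegative combination yields $\mathfrak{I}(f^+\star h^*)=\mathfrak{I}^+$. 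The second identity is the mirror image: either replace every $+$ by $-$ throughout, so that $\bar{\delta}\in\{-1,0\}$ and each summand is $\mathfrak{I}^-$ or $\mathfrak{I}^*$ (the latter again compatible with $\mathfrak{I}^-$), or simply note that $h^*$ is symmetric, whence $(f^-\star h^*)(k)=(f^+\star h^*)(-k)$ is the reflection of the first convolution through $0$ and therefore has ordering $\mathfrak{I}^-$.

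For the third identity I would expand $s_1^+\star s_2^-=\sum_{k,\ell} c_k d_\ell\,(D_k\star E_\ell)$. Each $D_k$ is even with $\delta=+$, so $\bar{\delta}(D_k,+)=+1$, and each $E_\ell$ is even with $\delta=-$, so $\bar{\delta}(E_\ell,-)=-1$; thus the total is $\bar{\delta}=0$ and Lemma~\ref{lem:lev_set} gives $\mathfrak{I}(D_k\star E_\ell)=\mathfrak{I}^*$, i.e.\ every summand is $\triangle$-regular (symmetric and unimodal about $0$). Since a nonnegative sum of symmetric unimodal functions is again symmetric and unimodal, Lemma~\ref{lem:shape} forces $\mathfrak{I}(s_1^+\star s_2^-)=\mathfrak{I}^*$.

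The main obstacle is the bookkeeping needed to certify that all summands are genuinely shape-equivalent, so that Lemma~\ref{lem:shape} can be iterated. The delicate point is the handling of the $\bar{\delta}=0$ terms, whose shape is only $\mathfrak{I}^*$: one must verify that $\mathfrak{I}^*$ refines to whichever of $\mathfrak{I}^+$ or $\mathfrak{I}^-$ is required so that these terms mix consistently with the $\bar{\delta}=\pm1$ terms. Everything else—checking the parities of the sets produced by the decompositions and computing the corresponding values of $\bar{\delta}$—is routine once the decompositions are in place.
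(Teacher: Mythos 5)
Your proposal is correct and follows essentially the same route as the paper: decompose each function into a nonnegative combination of sets via the layer-cake (set--function) decomposition, apply the ``Furthermore'' clause of Lemma~\ref{lem:lev_set} to each set--convolution after computing $\bar{\delta}$, and glue the summands with Lemma~\ref{lem:shape}. Your explicit remark that $\mathfrak{I}^*$ is compatible with $\mathfrak{I}^+$ (resp.\ $\mathfrak{I}^-$) is exactly the point the paper leaves implicit when it asserts $\mathfrak{I}\bigl(A_i^+\star B_j^*\bigr)=\mathfrak{I}^+$ for all $i$, and your treatment of the second and third identities fills in the cases the paper dismisses as ``similar.''
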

\begin{proof}
We show the first part $\mathfrak{I}\left( f^+ \star h^* \right) = \mathfrak{I}^+$ only. Other cases are similar. 

We consider the set-function decomposition used in the proof of Theorem~\ref{thm:decomposition} and Theorem \ref{thm:general_rearrangement_inequality}. Let $f=\sum a_i A_i$ and $h=\sum b_j B_j$ where $a_i=f^{[i]}-f^{[i+1]}$ and $b_j=f^{[j]}-f^{[j+1]}$ for $i=1,\cdots,p-1$ and $a_p=f^{[p]}$, $A_r=\{i_1,\cdots,i_r\}$, $b_p=g^{[p]}$, $B_r=\{j_1,\cdots,j_r\}$ for $\left( i_1,\cdots,i_p\right) \in \mathfrak{I}(f)$ and $\left( j_1,\cdots,j_p\right)\in \mathfrak{I}(h)$. As stated in the proof of Theorem \ref{thm:general_rearrangement_inequality}, since $h$ is $\triangle$-regular, $b_j=0$ when $j$ is even. So $h$ involves only $B_j$'s with odd sized sets. Then
\begin{align*}
f^+ \star h^* = \sum_{i, \text{odd } j} a_i b_j A_i^+ \star B_j^*.
\end{align*}
By Lemma \ref{lem:lev_set}, $\mathfrak{I}^+\subseteq \mathfrak{I}\left(A_i^+ \star B_j^*\right)$ for each $i$ and odd $j$. Then the proof follows by repeatedly applying Lemma \ref{lem:shape}.
\end{proof}

\subsection{All in one hand}
The rearrangement inequality of Theorem \ref{thm:general_rearrangement_inequality} implies the majorization with the help of one arbitrary function $g$ in Theorem \ref{thm:general_rearrangement_inequality}. A careful choice of $g$ enables us to derive the majorization from the generalized rearrangement inequality. 
\begin{thm}[Generalized rearrangement majorization]\label{thm:gen_maj}
Let $f$ be an arbitrary probability mass function. Assume $h_1,h_2,\cdots,h_n$ are $\triangle$-regular and $s_1,s_2,\cdots,$ $ s_{2m-1},s_{2m}$ are $\square$-regular,
\begin{align*}
f\star h_1\star h_2 \star &\cdots \star h_n \star s_1 \star s_2 \star \cdots\star  s_{2m-1} \star s_{2m} \\
& \prec f^{+/-} \star h_1^* \star h_2^* \star \cdots \star h_n^* \star s_1^+ \star s_2^- \star \cdots \star s_{2m-1}^+ \star s_{2m}^-
\end{align*}
where $f^{+/-}$ means that we can choose either $f^+$ or $f^-$.
\end{thm}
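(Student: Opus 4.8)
The plan is to derive the majorization $F\prec G$, where
$F=f\star h_1\star\cdots\star h_n\star s_1\star\cdots\star s_{2m}$ and
$G=f^{+}\star h_1^{*}\star\cdots\star h_n^{*}\star s_1^{+}\star s_2^{-}\star\cdots\star s_{2m-1}^{+}\star s_{2m}^{-}$, directly from the value-at-$0$ inequality of Theorem~\ref{thm:general_rearrangement_inequality}. Recalling the definition of $\prec$, this amounts to two things: the partial-sum domination $\sum_{i=1}^{r}F^{[i]}\le\sum_{i=1}^{r}G^{[i]}$ for every $r=1,\dots,p-1$, and the total-mass equality $\sum_{i=1}^{p}F^{[i]}=\sum_{i=1}^{p}G^{[i]}$. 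To organize the argument I abbreviate the ``regular background'' factors as $P=h_1\star\cdots\star h_n\star s_1\star\cdots\star s_{2m}$ and $P^{*}=h_1^{*}\star\cdots\star h_n^{*}\star s_1^{+}\star s_2^{-}\star\cdots\star s_{2m-1}^{+}\star s_{2m}^{-}$, so that $F=f\star P$ and $G=f^{+}\star P^{*}$, and Theorem~\ref{thm:general_rearrangement_inequality} applied with an \emph{arbitrary} nonnegative second function $g$ reads $(f\star g\star P)(0)\le(f^{+}\star g^{-}\star P^{*})(0)$.

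The key device is to exploit the freedom in $g$ so that a single evaluation at $0$ extracts a top-$r$ partial sum. Using commutativity and associativity of $\star$ together with the fact that $i\oplus j=0$ forces $j$ to be the additive inverse $-i$, one has the identities $(F\star g)(0)=\sum_{i}F(i)\,g(-i)$ and $(G\star g^{-})(0)=\sum_{i}G(i)\,g^{-}(-i)$. Now fix $r$, let $E\subseteq\zpz$ be a set of $r$ indices on which $F$ attains its $r$ largest values, and choose $g=\mathbf{1}_{-E}$, the indicator of $-E=\{-i:i\in E\}$; this is a nonnegative function with $\lvert\text{supp}(g)\rvert=r$ and $g(-i)=\mathbf{1}[i\in E]$, so that $(f\star g\star P)(0)=\sum_{i\in E}F(i)=\sum_{i=1}^{r}F^{[i]}$. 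On the other side, $g^{-}$ is a rearrangement of the size-$r$ indicator $g$, hence is itself a $\{0,1\}$-valued function whose support has exactly $r$ elements; therefore $(f^{+}\star g^{-}\star P^{*})(0)=(G\star g^{-})(0)$ is a sum of precisely $r$ values of the nonnegative function $G$, which is at most the sum of its $r$ largest values. Chaining these through Theorem~\ref{thm:general_rearrangement_inequality} gives
\[
\sum_{i=1}^{r}F^{[i]}=(f\star g\star P)(0)\le(f^{+}\star g^{-}\star P^{*})(0)=(G\star g^{-})(0)\le\sum_{i=1}^{r}G^{[i]},
\]
which is exactly the partial-sum domination for each $r=1,\dots,p-1$.

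The equality at $r=p$ is automatic: $f$, the $h_i$, and the $s_j$ are probability mass functions, and every rearrangement ($f^{+}$, $h_i^{*}$, $s_j^{\pm}$) preserves total mass, so both $F$ and $G$ are convolutions of probability mass functions and hence sum to $1$. This yields $F\prec G$. The stated freedom $f^{+/-}$ costs nothing: since $f$ and $g$ play symmetric roles as arbitrary nonnegative functions in Theorem~\ref{thm:general_rearrangement_inequality}, swapping their names (and using commutativity) also gives $(f\star g\star P)(0)\le(f^{-}\star g^{+}\star P^{*})(0)$, and rerunning the same computation with $g^{+}$ in place of $g^{-}$ — whose support again has size $r$ — proves $F\prec f^{-}\star P^{*}$ in identical fashion.

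I expect the only point needing care to be the engineering of $g$ in the middle step: one must check simultaneously that $g=\mathbf{1}_{-E}$ makes the left-hand evaluation at $0$ equal the top-$r$ sum of $F$, and that the \emph{enforced} support size $r$ of $g^{-}$ (respectively $g^{+}$) caps the right-hand evaluation by the top-$r$ sum of $G$ regardless of \emph{where} that support lands. Both facts hinge on rearrangements preserving the value multiset of an indicator, so no convexity or ordering of $G$ itself is needed. The case $p=+\infty$ requires no separate treatment: each $E$ is finite, so $g$ has finite support and every displayed sum with finite $r$ is a finite sum, while the total-mass identity still reads $\sum_{i=1}^{\infty}F^{[i]}=1=\sum_{i=1}^{\infty}G^{[i]}$.
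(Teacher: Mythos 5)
Your proposal is correct and takes essentially the same route as the paper: both proofs test the value-at-zero inequality of Theorem~\ref{thm:general_rearrangement_inequality} against the indicator $g=\mathbf{1}_{-E}$ of the negated top-$r$ index set of the left-hand convolution, and close with the total-mass identity. The only (harmless) deviation is in the final step, where the paper invokes Proposition~\ref{prop:shape} to show that $(g^{-}\star G)(0)$ equals the top-$r$ sum of $G$ exactly, whereas you use the weaker but sufficient observation that a sum of $r$ values of $G$ is at most its top-$r$ sum.
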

\begin{proof}
We show the case $f^+$ only in the right hand side. The other case can be proved in a similar way. We write $L:=f\star h_1\star h_2 \star \cdots \star h_n \star s_1 \star s_2 \star \cdots\star  s_{2m-1} \star s_{2m}$ and $R:=f^{+} \star h_1^* \star h_2^* \star \cdots \star h_n^* \star s_1^+ \star s_2^- \star \cdots \star s_{2m-1}^+ \star s_{2m}^-$ for abbreviation. 

We choose an ordered tuple $\left( i_1,i_2, \cdots, i_p\right)\in\mathfrak{I}\left( L \right)$. By choosing $g_r=\{-i_1,\cdots,-i_r\}$ for each $r=1,\cdots, p$, we have
\begin{align*}
\left( g_r \star L \right)(0) = \sum_{i=1}^{r} L^{[i]}.
\end{align*}
On the other hand, by repeatedly applying Proposition \ref{prop:shape}, we see that $\mathfrak{I}^+\subseteq \mathfrak{I}\left( R \right)$. Since $g_r^-$ is an indicator function on the first $r$ elements from $\mathfrak{I}^-$, we have
\begin{align*}
\left( g_r^- \star R \right) (0) = \sum_{i=1}^{r} R^{[i]}.
\end{align*}
Then Theorem \ref{thm:general_rearrangement_inequality} implies for each $r=1,\cdots, p$
\begin{align*}
\sum_{i=1}^{r} L^{[i]} \leq \sum_{i=1}^{r} R^{[i]}.
\end{align*}
The equality holds when $r=p$ since all are probability mass functions. Therefore the majorization follows.
\end{proof}
The following lemma shows why we have to control the shape in the summand of Theorem \ref{thm:main}.
\begin{lem}\label{lem:maj_sum}
If $f_1 \prec g_1$ and $f_2 \prec g_2$, and $g_1$ and $g_2$ are shape-compatible,
\begin{align*}
f_1 + f_2 \prec g_1 + g_2.
\end{align*}
\end{lem}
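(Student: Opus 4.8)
The plan is to reduce everything to elementary properties of the ``top-$r$ sum'' functional $S_r(h) := \sum_{i=1}^{r} h^{[i]}$. The key observation is that, for any real-valued function $h$ on $\zpz$, the sum of its $r$ largest values can be written as a maximum of linear functionals,
\[
S_r(h) = \max_{|T|=r} \ \sum_{i\in T} h(i),
\]
the maximum being over $r$-element subsets $T\subseteq\zpz$. Written this way, $S_r$ is a support function, hence sublinear, and in particular \emph{subadditive}: $S_r(h_1+h_2)\leq S_r(h_1)+S_r(h_2)$. The whole proof hinges on contrasting this subadditivity on the left with an exact additivity on the right that is supplied by shape-equivalence.

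First I would treat the majorizing side. Since $g_1$ and $g_2$ are shape-equivalent, by definition they share a common ordered index set $\mathfrak{I}(g_1)=\mathfrak{I}(g_2)=\{i_1,\ldots,i_p\}$ along which both functions are nonincreasing. By Lemma~\ref{lem:shape} (applied with coefficients $a=b=1$), the sum $g_1+g_2$ is shape-equivalent to each $g_j$ and so is nonincreasing along the same $\mathfrak{I}$; therefore $(g_1+g_2)^{[r]}=g_1^{[r]}+g_2^{[r]}$ for every $r$, which gives the identity
\[
S_r(g_1+g_2)=S_r(g_1)+S_r(g_2).
\]
Thus shape-equivalence is exactly the ingredient that upgrades the generic subadditivity of $S_r$ to an exact sum on the majorizing side.

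Next I would chain the inequalities. For each $r=1,\ldots,p-1$,
\[
S_r(f_1+f_2)\ \leq\ S_r(f_1)+S_r(f_2)\ \leq\ S_r(g_1)+S_r(g_2)\ =\ S_r(g_1+g_2),
\]
where the first step is subadditivity of $S_r$, the second is the two hypotheses $f_1\prec g_1$ and $f_2\prec g_2$ added together, and the equality is the identity just established. This yields precisely the partial-sum inequalities in the definition of $f_1+f_2\prec g_1+g_2$. Finally I would verify the equality clause at $r=p$: the total mass $S_p(h)=\sum_{i\in\zpz}h(i)$ is insensitive to ordering, so additivity holds trivially on both sides, and combining $S_p(f_1)=S_p(g_1)$ with $S_p(f_2)=S_p(g_2)$ gives $S_p(f_1+f_2)=S_p(g_1+g_2)$.

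I do not anticipate a genuine technical obstacle; the argument is short once the representation of $S_r$ as a maximum of linear functionals is in hand. The one point demanding care is the asymmetry between the two sides: on the left only subadditivity is available (and all that is needed), while on the right exact additivity is forced by the shape-equivalence hypothesis. This asymmetry also explains why the hypothesis that $g_1$ and $g_2$ be shape-equivalent cannot be dropped—without it the right-hand partial sums would only satisfy $S_r(g_1+g_2)\leq S_r(g_1)+S_r(g_2)$, and the chain of inequalities would break.
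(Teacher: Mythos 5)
Your proof is correct and follows essentially the same route as the paper's: exact additivity of the ordered partial sums on the majorizing side via Lemma~\ref{lem:shape} and shape-equivalence, subadditivity on the other side, and then chaining with the two majorization hypotheses. If anything, your justification of the subadditivity step via $S_r(h)=\max_{|T|=r}\sum_{i\in T}h(i)$ is cleaner than the paper's, which asserts the pointwise bound $(f_1+f_2)^{[r]}\leq f_1^{[r]}+f_2^{[r]}$ --- false for individual $r$ in general, though the partial-sum inequality it is used for is true.
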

\begin{proof}
By Lemma \ref{lem:shape}, all $g_1$, $g_2$, and $g_1+g_2$ are shape-compatible. Then $\left( g_1+g_2\right)^{[r]}=g_1^{[r]}+g_2^{[r]}$ for each $r=1,\cdots, p$. In general, $\left( f_1+f_2\right)^{[r]}\leq f_1^{[r]}+f_2^{[r]}$. Therefore
\begin{align*}
\sum_{i=1}^{r}\left( f_1 + f_2 \right)^{[i]} \leq \sum_{i=1}^{r}\left( f_1^{[i]} + f_2^{[i]} \right)\leq \sum_{i=1}^{r}\left( g_1^{[i]} + g_2^{[i]} \right) = \sum_{i=1}^{r}\left( g_1 + g_2 \right)^{[i]}.
\end{align*}
When $r=p$, the equality follows easily by the majorization each.
\end{proof}

Now we are ready to prove our main Theorem \ref{thm:main}.
\begin{proof}[Proof of Theorem \ref{thm:main}]
We apply Theorem~\ref{thm:decomposition} for each $f_i$, so $f_i=f_{i,\triangle} + f_{i,\square}$. Then
\begin{align}\label{eqn:con_decomp}
f_1 \star f_2 \star \cdots \star f_n = \sum_{(\omega_1,\cdots,\omega_n)\in\{\triangle, \square\}^n} f_{1,\omega_1} \star f_{2,\omega_2} \star \cdots \star f_{n,\omega_n}.
\end{align}
Since each $w_i$ has two choices, the summands in the right hand side \eqref{eqn:con_decomp} can be $2^n$ choices. By letting $\omega=(\omega_1,\cdots,\omega_n)$, for each summand, we can find $\delta(i,\omega)$'s following the rule described in Section \ref{subsec:controlshape} below. Then by Theorem \ref{thm:gen_maj}, we have
\begin{align}\label{eq:decomposition}
f_{1,\omega_1} \star f_{2,\omega_2} \star \cdots \star f_{n,\omega_n} \prec f_{1,\omega_1}^{\delta(1,\omega)} \star f_{2,\omega_2}^{\delta(2,\omega)} \star \cdots \star f_{n,\omega_n}^{\delta(n,\omega)}
\end{align}
and $\mathfrak{I}^+\subseteq\mathfrak{I}\left( f_{1,\omega_1}^{\delta(1,\omega)} \star f_{2,\omega_2}^{\delta(2,\omega)} \star \cdots \star f_{n,\omega_n}^{\delta(n,\omega)} \right)$. Since all summands are $\mathfrak{I}^+$-compatible, Lemma \ref{lem:maj_sum} implies
\begin{align*}
f_1 \star f_2 \star \cdots \star f_n = &\sum_{(\omega_1,\cdots,\omega_n)\in\{\triangle, \square\}^n} f_{1,\omega_1} \star f_{2,\omega_2} \star \cdots \star f_{n,\omega_n}\\
 \prec &\sum_{(\omega_1,\cdots,\omega_n)\in\{\triangle, \square\}^n}f_{1,\omega_1}^{\delta(1,\omega)} \star f_{2,\omega_2}^{\delta(2,\omega)} \star \cdots \star f_{n,\omega_n}^{\delta(n,\omega)}.
\end{align*}
Then we choose a convex function $\Phi(x) = -x^\alpha$ for $\alpha\in (0,1)$, $\Phi(x)=x\log x$ for $\alpha=1$, and $\Phi(x)=x^\alpha$ for $\alpha\in (1,+\infty)$. Then the proof follows 
and decorating terms into R\'enyi entropy.
\end{proof}

\subsection{How to control each summand to be shape-compatible} \label{subsec:controlshape}
With the help of Proposition \ref{prop:shape}, we can control the shape of rearrangement convolutions. Similar to $\bar{\delta}(A,\delta'(A))$ in Section \ref{subsec:gen_rearrangement}, we define a function $\bar{\delta}(f,\delta'(f))$ by
\begin{displaymath}
\bar{\delta}(f,\delta'(f)) =  \left \{ 
\begin{array}{ll}
-1 & \text{if $f$ is $\square$-regular and $\delta'(f)=-$,}\\
0 & \text{if $f$ is $\triangle$-regular,}\\
+1 & \text{if $f$ is $\square$-regular and $\delta'(f)=+$.}
\end{array}
\right.
\end{displaymath}
Since each summand involves only $\triangle$-regular and/or $\square$-regular functions, the following Proposition is sufficient to control the shape.
\begin{prop}\label{prop:choose}
If $h_1,\cdots,h_n$ are $\triangle$-regular and $s_1,\cdots,s_m$ are $\square$-regular,
\begin{align*}
\mathfrak{I}^+\subseteq \mathfrak{I}\left( h_1^* \star \cdots \star h_n^* \star s_1^{\delta'(s_1)} \star \cdots \star s_m^{\delta'(s_m)} \right)
\end{align*}
where $\delta'(s_j)\in\{+,-\}$ if $\bar{\delta}:=\sum_{j=1}^{m} \bar{\delta}(s_j,\delta'(s_j)) \in\{0,+1\}$.
\end{prop}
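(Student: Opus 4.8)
The plan is to follow the proof of Proposition~\ref{prop:shape} in spirit, reducing the statement to the level of set-indicator functions and then invoking Lev's set rearrangement majorization (Lemma~\ref{lem:lev_set}). First I would decompose each factor via the set-function decomposition of Theorem~\ref{thm:decomposition}, writing each $\triangle$-regular factor as $h_k^* = \sum_i b_i^{(k)} C_i^{(k)}$ and each $\square$-regular factor as $s_j^{\delta(s_j)} = \sum_\ell c_\ell^{(j)} D_\ell^{(j)}$, with the sets taken in the orientation matching the rearrangement. As noted in the proof of Theorem~\ref{thm:general_rearrangement_inequality}, $\triangle$-regularity kills the even-indexed coefficients, so every $C_i^{(k)}$ is an odd-sized (hence symmetric) set, while $\square$-regularity kills the odd-indexed coefficients, so every $D_\ell^{(j)}$ is an even-sized set carried in the $\delta(s_j)$ orientation. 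Expanding the whole convolution by bilinearity then writes it as a nonnegative linear combination of set-convolutions of the form
\begin{align*}
C_{i_1}^{(1)} \star \cdots \star C_{i_n}^{(n)} \star D_{\ell_1}^{(1)} \star \cdots \star D_{\ell_m}^{(m)}.
\end{align*}

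Next I would apply the shape conclusion of Lemma~\ref{lem:lev_set} to each such term. The key point is that the balance $\bar{\delta}$ of any term is constant across the expansion: each odd-sized $C_{i_k}^{(k)}$ contributes $\bar{\delta}(C_{i_k}^{(k)},\cdot)=0$, whereas each even-sized $D_{\ell_j}^{(j)}$ contributes $\bar{\delta}(s_j,\delta(s_j))$, a quantity depending only on $j$ and not on the summation index $\ell_j$. Hence every term has the same total balance
\begin{align*}
\sum_{k=1}^{n} 0 + \sum_{j=1}^{m} \bar{\delta}(s_j,\delta(s_j)) = \bar{\delta} \in \{0,+1\},
\end{align*}
so Lemma~\ref{lem:lev_set} gives each set-convolution term shape $\mathfrak{I}^*$ when $\bar{\delta}=0$ and shape $\mathfrak{I}^+$ when $\bar{\delta}=+1$.

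Finally I would combine the terms with Lemma~\ref{lem:shape}. In both admissible cases the index ordering $\mathfrak{I}^+$ is valid for every term---directly when $\bar{\delta}=+1$, and because a $\triangle$-regular ($\mathfrak{I}^*$) function admits $\mathfrak{I}^+$ as one of its two orderings when $\bar{\delta}=0$---so the terms are pairwise shape-equivalent through $\mathfrak{I}^+$. Repeatedly applying Lemma~\ref{lem:shape} to the nonnegative linear combination then forces the whole convolution to share this shape, giving $\mathfrak{I}\big(h_1^*\star\cdots\star h_n^*\star s_1^{\delta(s_1)}\star\cdots\star s_m^{\delta(s_m)}\big)=\mathfrak{I}^+$. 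The step I would treat most carefully is exactly this last one in the $\bar{\delta}=0$ regime: each term is only $\triangle$-regular rather than strictly $\mathfrak{I}^+$, and one must verify that $\mathfrak{I}^+$ is genuinely an admissible common ordering so that Lemma~\ref{lem:shape} yields the claimed $\mathfrak{I}^+$ and not merely the weaker $\mathfrak{I}^*$.
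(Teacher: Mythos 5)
Your proof is correct, but it is organized differently from the paper's. The paper's proof of this proposition is a three-line iteration of Proposition~\ref{prop:shape}: it pairs the $\square$-regular factors so that each pair $s_i^+\star s_j^-$ is $\triangle$-regular (shape $\mathfrak{I}^*$), absorbs these pairs together with the $h_k^*$ into a single $\triangle$-regular block, and then, when $\bar{\delta}=+1$, applies the two-factor statement $\mathfrak{I}(f^+\star h^*)=\mathfrak{I}^+$ once more to the leftover $+$-oriented factor. You instead go one level lower: you redo the set-function decomposition for the full $(n+m)$-fold convolution, observe that the balance $\bar{\delta}$ is constant across all terms of the bilinear expansion, apply Lemma~\ref{lem:lev_set} to each set-convolution term, and recombine with Lemma~\ref{lem:shape}. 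In effect you are reproving a generalization of Proposition~\ref{prop:shape} from scratch rather than using it as a black box; the engine (Lev's set rearrangement majorization plus Lemma~\ref{lem:shape}) is the same in both arguments. Your route is self-contained and makes the invariance of $\bar{\delta}$ under the expansion explicit, which is the real reason the proposition holds; the paper's route is shorter because the two- and three-factor cases have already been established. Your closing caveat about the $\bar{\delta}=0$ regime is handled correctly: an $\mathfrak{I}^*$ term admits $\mathfrak{I}^+$ as one of its orderings, so all terms are shape-equivalent through $\mathfrak{I}^+$ and Lemma~\ref{lem:shape} propagates that ordering to the sum.
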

\begin{proof}
For some $i$ and $j$, $\mathfrak{I}^*\subseteq\mathfrak{I}\left( s_i^{+} \star s_j^{-} \right) $ by Proposition \ref{prop:shape}. Then we see that $\mathfrak{I}^+\subseteq\mathfrak{I}\left(s_1^{\delta'(s_1)} \star \cdots \star s_m^{\delta'(s_m)} \right)$ if $\bar{\delta}:=\sum_{j=1}^{m} \bar{\delta}(s_j,\delta'(s_j)) \in\{0,+1\}$. \\
Since $\mathfrak{I}^*\subseteq\mathfrak{I}\left(h_1^* \star \cdots \star h_n^* \right)$, the proof follows by applying Proposition \ref{prop:shape} again.
\end{proof}
We remark that $\mathfrak{I}^-\subseteq\mathfrak{I}\left( h_1^* \star \cdots \star h_n^* \star s_1^{\delta'(s_1)} \star \cdots \star s_m^{\delta'(s_m)} \right)$ if $\bar{\delta} \in\{-1, 0\}$. Then there is no contraction to keep each summand to be $\mathfrak{I}^-$-compatible (instead of $\mathfrak{I}^+$) in the statement of Theorem \ref{thm:main}.

As a final remark, we would like to clarify the relationship between $\delta(i,\omega)$ and $\delta'(f)$. Although both functions define one of the shapes $+$ or $-$, $\delta(i,\omega)$ is specifically determined when $\delta'(f_{i,\omega_i})$'s are valid depending on the value of $\bar{\delta}=\sum\bar{\delta}(f_{i,\omega_i},\delta'(f_{i,\omega_i}))$. For example, for $n=2$, $\delta(1,\square,\square)=+$ is valid when $\delta'(f_{1,\square})=+$ and $\delta'(f_{2,\square})=-$ in Theorem \ref{thm:main} since $\bar{\delta}=0$. But $\delta(1,\square,\square)=+$ is not valid when $\delta'(f_{1,\square})=+$ and $\delta'(f_{2,\square})=+$ in Theorem \ref{thm:main} since $\bar{\delta}=+2$. To illustrate the validity, the following table shows all valid choices of $\delta(i,\omega)$'s when $n=2$.

\begin{center}
\begin{tabular}{ |c||c| } 
\hline
$\left(\delta(1,\omega), \delta(2,\omega)\right)$ & valid choices\\
\hline\hline
$\left(\delta({1,\triangle,\triangle}), \delta({2,\triangle,\triangle})\right)$ & $(+,+)$, $(+,-)$, $(-,+)$, $(-,-)$ \\ 
\hline
$\left(\delta({1,\triangle,\square}), \delta({2,\triangle,\square})\right)$ &
$(+,+)$, $(-,+)$ \\ 
\hline
$\left(\delta({1,\square,\triangle}), \delta({2,\square,\triangle})\right)$ &
$(+,+)$, $(+,-)$ \\ 
\hline
$\left(\delta({1,\square,\square}), \delta({2,\square,\square})\right)$ &
$(+,-)$, $(-,+)$ \\
\hline
\end{tabular}
\end{center}
Therefore total 32 combinations of $\delta(i,\omega)$'s are valid when $n=2$.

\section{Proof of Theorem~\ref{thm:same}}\label{sec:canon}



Focus on the right hand side of the equation $f_{1,\omega_1}^{\delta(1,\omega_1)} \star f_{2,\omega_2}^{\delta(2,\omega_2)} \star \cdots \star f_{n,\omega_n}^{\delta(n,\omega_n)}$ and first classify those terms into two $\triangle$ and $\square$ regular parts based on $\omega_i\in\{\triangle,\square\}$, respectively.
\begin{align*}
f_{1,\omega_1}^{\delta(1,\omega)} \star f_{2,\omega_2}^{\delta(2,\omega)} \star \cdots \star f_{n,\omega_n}^{\delta(n,\omega)} = \left( h_1^{\delta'(h_1)} \star \cdots \star h_k^{\delta'(h_k)} \right) \star \left( s_1^{\delta'(s_1)} \star \cdots \star s_m^{\delta'(s_m)} \right)
\end{align*}
where $h_1,\cdots,h_k$ are $\triangle$-regular, $s_1,\cdots,s_m$ are $\square$-regular among $f_{i,\omega_i}$'s, and $n=k+m$. For $\triangle$-regular $h_1,\cdots,h_k$, we know that $h_i^+=h_i^-$ for each $i$. Therefore, for any choice of $\delta_1'(\cdot)$ and $\delta_2'(\cdot)$
\begin{align}\label{eq:choicetriangle}
h_1^{\delta_1'(h_1)} \star \cdots \star h_k^{\delta_1'(h_k)} = h_1^{\delta_2'(h_1)} \star \cdots \star h_k^{\delta_2'(h_k)}.
\end{align}

For $\square$-regular $s_1,\cdots,s_m$, we prove the following lemma first.
\begin{lem}\label{lem:choicesquare}
If $s_1, \cdots, s_m$ are $\square$-regular,
\begin{align*}
s_1^{\delta'(s_j)} \star \cdots \star s_m^{\delta'(s_m)} \left(i \right) = s_1^{+} \star \cdots \star s_m^{+} \left( i -\left\lfloor \frac{m}{2}\right\rfloor \right) 
\end{align*}
where $\delta'(s_j)=+$ if $j$ is odd, and $\delta'(s_j)=-$ if $j$ is even. So $\bar{\delta}:=\sum_{j=1}^{m} \bar{\delta}(s_j,\delta'(s_j)) \in\{0,+1\}$.
\end{lem}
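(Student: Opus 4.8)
The plan is to exploit the defining feature of $\square$-regularity, namely that the two rearrangements $s_j^+$ and $s_j^-$ differ only by a single circular shift, and then to track how these unit shifts accumulate through the convolution. Concretely, by the definition of $\square$-regular one has $s_j^-(z) = s_j^+(z \oplus 1)$, so $s_j^-$ is $1$-circular with respect to $s_j^+$. Under the prescribed rule, the factors carrying a $-$ rearrangement are exactly those with even index $j$, and there are $\lfloor m/2 \rfloor$ of them among $\{1,\ldots,m\}$; the remaining $\lceil m/2 \rceil$ factors with odd index appear as $s_j^+$ and contribute no shift.

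First I would record the single-factor relation $s_j^-(z)=s_j^+(z\oplus 1)$, and then invoke the observation already noted in the paper that $c$-circularity is preserved under convolution: if $g$ is $c$-circular with respect to $f$, then $g\star h$ is $c$-circular with respect to $f\star h$, and more generally convolving a $c_1$-circular factor with a $c_2$-circular factor produces a $(c_1+c_2)$-circular product relative to the corresponding all-$+$ convolution. This is the engine that lets the unit shifts add up, and commutativity of $\star$ lets me pull every shift onto one side.

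Next I would run an induction on $m$, letting $P_m = s_1^{\delta(s_1)} \star \cdots \star s_m^{\delta(s_m)}$ and $Q_m = s_1^+ \star \cdots \star s_m^+$, with inductive hypothesis that $P_m$ is a circular shift of $Q_m$ by $\lfloor m/2 \rfloor$ in the direction fixed by $s^-(z)=s^+(z\oplus 1)$. The base case $m=1$ is immediate, since $P_1 = s_1^+ = Q_1$ and $\lfloor 1/2\rfloor = 0$. In the inductive step, appending an odd-index factor $s_{m+1}^+$ preserves the offset while $\lfloor (m+1)/2\rfloor = \lfloor m/2\rfloor$, whereas appending an even-index factor $s_{m+1}^-$, being $1$-circular with respect to $s_{m+1}^+$, raises the offset by one, matching $\lfloor (m+1)/2\rfloor = \lfloor m/2\rfloor + 1$. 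This yields the claimed identity that $P_m$ is $Q_m$ shifted by $\lfloor m/2\rfloor$. Finally, since every $s_j$ is $\square$-regular, each $+$ contributes $\bar\delta(s_j,+)=+1$ and each $-$ contributes $\bar\delta(s_j,-)=-1$, so $\bar\delta = \lceil m/2\rceil - \lfloor m/2\rfloor$, which equals $0$ when $m$ is even and $1$ when $m$ is odd, giving $\bar\delta\in\{0,+1\}$ as asserted.

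The genuine content here is small, so the only real care lies in the bookkeeping: correctly identifying that the minus-factors sit at the even indices, confirming that their count is $\lfloor m/2\rfloor$, and keeping the direction of the circular shift consistent with the convention $s^-(z)=s^+(z\oplus 1)$ throughout the accumulation. I expect the parity bookkeeping in the inductive step, distinguishing $m$ even from $m$ odd so that the floor function advances correctly, to be the main (if modest) obstacle, rather than any deep difficulty.
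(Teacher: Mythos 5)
Your proposal is correct and follows essentially the same route as the paper: an induction on the number of $\square$-regular factors, using the unit circular shift $s_j^-(z)=s_j^+(z\oplus 1)$ and the fact that circular shifts pass through convolution, with the parity bookkeeping making the offset advance by one exactly when an even-indexed (minus) factor is appended. The only cosmetic difference is that you organize the inductive step by the parity of the incoming index while the paper splits on the parity of $K$, and both treatments leave the sign convention of the shift to the reader in the same way.
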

\begin{proof}
We prove this by induction. If $s_1$ is $\square$-regular, $\delta'(s_1)=+$. So
\begin{align*}
s_1^{\delta(s_1)}\left(i\right) = s_1^+\left(i-0\right).
\end{align*}
Now assume that Lemma \ref{lem:choicesquare} is true for $m\leq M$. First assume that $M$ is an odd number. Then by induction hypothesis,
\begin{align*}
s_1^{\delta(s_1)} \star \cdots \star s_M^{\delta(s_M)} \left(i \right) = s_1^{+} \star \cdots \star s_M^{+} \left( i - \frac{M-1}{2}\right) 
\end{align*}
Let $L_M :=s_1^{\delta'(s_1)} \star \cdots \star s_M^{\delta'(s_M)}$ and $R_M:= s_1^{+} \star \cdots \star s_M^{+}$. By Lemma \ref{lem:shape}, $L_M=L_M^+$. So for $s_{M+1}$, $\delta\left(s_{M+1}\right)=-$. Since $s_{M+1}^+(i)=s_{M+1}^-(i+1)$ by symmetry,
\begin{align*}
\left( L_M \star s_{M+1}^- \right) (i) = \left( L_M \star s_{M+1}^+ \right) \left( i - 1 \right) = \left( R_M \star s_{M+1}^+ \right)\left( i - \frac{M+1}{2}\right)
\end{align*}
where we used $L_M\left(i\right) = R_M\left(  i - \frac{M-1}{2} \right)$ to find the shifted amount in the second equality above.

If $M$ is an even number, $\delta(s_{M+1})=+$ and $L_M\left(i\right) = R_M\left(  i - \frac{M}{2} \right)$. Similar to the odd $M$ case,
\begin{align*}
\left( L_M \star s_{M+1}^+ \right) (i) = \left( R_M \star s_{M+1}^+ \right)\left( i - \frac{M}{2}\right).
\end{align*}
Hence the induction proof follows.
\end{proof}
By reordering $s_j$'s, we see that Lemma \ref{lem:choicesquare} is still true for any choice of $\delta(\cdot)$ satisfying $\bar{\delta}:=\sum_{j=1}^{m} \bar{\delta}(s_j,\delta(s_j)) \in\{0,+1\}$. i.e. we can drop the condition that $\delta(s_j)=+$ if $j$ is odd, and $\delta(s_j)=-$ if $j$ is even in Lemma \ref{lem:choicesquare}. Therefore Lemma \ref{lem:choicesquare} implies that for any different choices of $\delta_1(\cdot)$ and $\delta_2(\cdot)$
\begin{align}\label{eq:choicesquare}
s_1^{\delta_1'(s_1)} \star \cdots \star s_m^{\delta_1'(s_m)} (i) = s_1^{\delta_2'(s_1)} \star \cdots \star s_m^{\delta_2'(n_m)} (i) =  s_1^{+} \star \cdots \star s_m^{+} \left( i -\left\lfloor \frac{m}{2}\right\rfloor \right).
\end{align}

\section{Conclusion}

In this paper, we established a lower bound of the entropy of sums in prime cyclic groups and developed several applications
including to discrete entropy power inequalities, the Littlewood-Offord problem, and counting solutions for some linear equations. 
This required extending rearrangement inequalities in prime cyclic groups and combining them with function ordering 
in the sense of the majorization. 

In the sumset theory, the Cauchy-Davenport theorem is generalized by Kneser's theorem, which can be stated
for discrete abelian groups (see, e.g., \cite{TV06:book, Gry10, Gry13:book}). 
An entropy version of Kneser's theorem would be very interesting but is currently unavailable.
It would also be interesting to generalize our results to all cyclic groups $\mathbb{Z}\big/{n\mathbb{Z}}$ for all $n\in\mathbb{N}$,
and in another direction to the multidimensional case $(\zpz)^d$ for $d>1$.

\section*{Acknowledgments}
This work was supported in part by the U.S. National Science Foundation through grants DMS-1409504 (CAREER) and CCF-1346564.


\begin{thebibliography}{10}

\bibitem{ALM17}
E.~Abbe, J.~Li, and M.~Madiman.
\newblock Entropies of weighted sums in cyclic groups and an application to
  polar codes.
\newblock {\em Entropy, Special issue on ``Entropy and Information
  Inequalities'' (edited by V. Jog and J. Melbourne)}, 19(9), September 2017.

\bibitem{Ara80}
T.~V. Arak.
\newblock Approximation of {$n$}-fold convolutions of distributions, having a
  nonnegative characteristic function, with accompanying laws.
\newblock {\em Teor. Veroyatnost. i Primenen.}, 25(2):225--246, 1980.

\bibitem{Ara81}
T.~V. Arak.
\newblock On the rate of convergence in {K}olmogorov's uniform limit theorem.
\newblock {\em Teor. Veroyatnost. i Primenen.}, 26(2):225--245, 1981.

\bibitem{Arn12:book}
B.~C. Arnold.
\newblock {\em Majorization and the {L}orenz order: a brief introduction},
  volume~43 of {\em Lecture Notes in Statistics}.
\newblock Springer-Verlag, Berlin, 1987.

\bibitem{BJKM10}
A.~D. Barbour, O.~Johnson, I.~Kontoyiannis, and M.~Madiman.
\newblock Compound {P}oisson approximation via information functionals.
\newblock {\em Electron. J. Probab.}, 15(42):1344--1368, 2010.

\bibitem{CM17}
J.~Chakravorty and A.~Mahajan.
\newblock Fundamental limits of remote estimation of autoregressive {M}arkov
  processes under communication constraints.
\newblock {\em IEEE Trans. Automat. Control}, 62(3):1109--1124, 2017.

\bibitem{Cover2006}
Cover, Thomas M. and Thomas, Joy A.
\newblock Elements of Information Theory.
\newblock {\em Wiley Series in Telecommunications and Signal Processing} 2006.

\bibitem{Ess66}
C.~Esseen.
\newblock On the Kolmogorov-Rogozin inequality for the concentration function.
\newblock {\em Probability Theory and Related Fields} 66(5):210–-216, 1966.

\bibitem{Erd45}
P.~Erd{\"o}s.
\newblock On a lemma of {L}ittlewood and {O}fford.
\newblock {\em Bull. Amer. Math. Soc.}, 51:898--902, 1945.

\bibitem{FMMZ16}
M.~Fradelizi, M.~Madiman, A.~Marsiglietti, and A.~Zvavitch.
\newblock {Do Minkowski averages get progressively more convex?}
\newblock {\em C. R. Acad. Sci. Paris S\'er. I Math.}, 354(2):185--189,
  February 2016.

\bibitem{FMMZ17}
M.~Fradelizi, M.~Madiman, A.~Marsiglietti, and A.~Zvavitch.
\newblock {The convexifying effect of Minkowski summation}.
\newblock {\em Preprint, {\tt arXiv:1704.05486}}, 2017.

\bibitem{GG01}
R.~J. Gardner and P.~Gronchi.
\newblock A {B}runn-{M}inkowski inequality for the integer lattice.
\newblock {\em Trans. Amer. Math. Soc.}, 353(10):3995--4024 (electronic), 2001.

\bibitem{GEZ17}
F.~G\"otze, Yu.~S. Eliseeva, and A.~Yu. Za\u\i~tsev.
\newblock Arak inequalities for concentration functions and the
  {L}ittlewood-{O}fford problem.
\newblock {\em Teor. Veroyatn. Primen.}, 62(2):241--266, 2017.

\bibitem{Gry10}
D.~J. Grynkiewicz.
\newblock On extending {P}ollard's theorem for {$t$}-representable sums.
\newblock {\em Israel J. Math.}, 177:413--439, 2010.

\bibitem{Gry13:book}
D.~J. Grynkiewicz.
\newblock {\em Structural additive theory}, volume~30 of {\em Developments in
  Mathematics}.
\newblock Springer, Cham, 2013.

\bibitem{HAT14}
S.~Haghighatshoar, E.~Abbe, and E.~Telatar.
\newblock A new entropy power inequality for integer-valued random variables.
\newblock {\em IEEE Trans. Inform. Th.}, 60(7):3787--3796, July 2014.

\bibitem{HLP88:book}
G.~H. Hardy, J.~E. Littlewood, and G.~P{\'o}lya.
\newblock {\em Inequalities}.
\newblock Cambridge Mathematical Library. Cambridge University Press,
  Cambridge, 1988.
\newblock Reprint of the 1952 edition.

\bibitem{HJK10}
P.~Harremo{\"e}s, O.~Johnson, and I.~Kontoyiannis.
\newblock Thinning, entropy, and the law of thin numbers.
\newblock {\em IEEE Trans. Inform. Theory}, 56(9):4228--4244, 2010.

\bibitem{HV03}
P.~Harremo{\"e}s and C.~Vignat.
\newblock An entropy power inequality for the binomial family.
\newblock {\em J. Inequal. Pure Appl. Math.}, 4(5):Article 93, 6 pp.
  (electronic), 2003.

\bibitem{JA14}
V.~Jog and V.~Anantharam.
\newblock The entropy power inequality and {M}rs. {G}erber's lemma for groups
  of order {$2^n$}.
\newblock {\em IEEE Trans. Inform. Theory}, 60(7):3773--3786, 2014.

\bibitem{JKM13}
O.~Johnson, I.~Kontoyiannis, and M.~Madiman.
\newblock Log-concavity, ultra-log-concavity, and a maximum entropy property of
  discrete compound {Poisson} measures.
\newblock {\em Discrete Appl. Math.}, 161:1232--1250, 2013.
\newblock DOI: 10.1016/j.dam.2011.08.025.

\bibitem{JS00}
O.~Johnson and Y.~Suhov.
\newblock Entropy and convergence on compact groups.
\newblock {\em J. Theoret. Probab.}, 13(3):843--857, 2000.

\bibitem{JY10}
O.~Johnson and Y.~Yu.
\newblock Monotonicity, thinning, and discrete versions of the entropy power
  inequality.
\newblock {\em IEEE Trans. Inform. Theory}, 56(11):5387--5395, 2010.

\bibitem{Kan76:1}
M.~Kanter.
\newblock Probability inequalities for convex sets and multidimensional
  concentration functions.
\newblock {\em J. Multivariate Anal.}, 6(2):222--236, 1976.

\bibitem{KHJ05}
I.~Kontoyiannis, P.~Harremo{\"e}s, and O.~Johnson.
\newblock Entropy and the law of small numbers.
\newblock {\em IEEE Trans. Inform. Theory}, 51(2):466--472, February 2005.

\bibitem{KM14}
I.~Kontoyiannis and M.~Madiman.
\newblock Sumset and inverse sumset inequalities for differential entropy and
  mutual information.
\newblock {\em IEEE Trans. Inform. Theory}, 60(8):4503--4514, August 2014.

\bibitem{Kri16}
M.~Krishnapur.
\newblock Anti-concentration inequalities.
\newblock {\em Lecture notes}, 2016.

\bibitem{Lev01}
V.~F. Lev.
\newblock Linear equations over {$\Bbb F_p$} and moments of exponential sums.
\newblock {\em Duke Math. J.}, 107(2):239--263, 2001.

\bibitem{LO43}
J.~E. Littlewood and A.~C. Offord.
\newblock On the number of real roots of a random algebraic equation. {III}.
\newblock {\em Rec. Math. [Mat. Sbornik] N.S.}, 12(54):277--286, 1943.

\bibitem{MG17}
M.~Madiman and F.~Ghassemi.
\newblock Combinatorial entropy power inequalities: A preliminary study of the
  {S}tam region.
\newblock {\em Preprint, {\tt arXiv:1704.01177}}, 2017.

\bibitem{MK18}
M.~Madiman and I.~Kontoyiannis.
\newblock {Entropy bounds on abelian groups and the Ruzsa divergence}.
\newblock {\em IEEE Trans. Inform. Theory}, 64(1):77--92, January 2018.
\newblock Available online at {\tt arXiv:1508.04089}.

\bibitem{MMT10:itw}
M.~Madiman, A.~Marcus, and P.~Tetali.
\newblock Information-theoretic inequalities in additive combinatorics.
\newblock In {\em Proc. IEEE Inform. Theory Workshop}, Cairo, Egypt, January
  2010.

\bibitem{MMT12}
M.~Madiman, A.~Marcus, and P.~Tetali.
\newblock Entropy and set cardinality inequalities for partition-determined
  functions.
\newblock {\em Random Struct. Alg.}, 40:399--424, 2012.

\bibitem{MMX17:0}
M.~Madiman, J.~Melbourne, and P.~Xu.
\newblock Forward and reverse entropy power inequalities in convex geometry.
\newblock In E.~Carlen, M.~Madiman, and E.~M. Werner, editors, {\em Convexity
  and Concentration}, volume 161 of {\em IMA Volumes in Mathematics and its
  Applications}, pages 427--485. Springer, 2017.
\newblock Available online at {\tt arXiv:1604.04225}.

\bibitem{MS17}
M.~Madiman and P.~Singla.
\newblock {Differential entropy as a GL$(n,\mathbb{Z})$-invariant on locally
  compact abelian groups}.
\newblock {\em Preprint}, 2017.

\bibitem{MWW17:2}
M.~Madiman, L.~Wang, and J.~O. Woo.
\newblock {Majorization and R\'enyi entropy inequalities via Sperner theory}.
\newblock {\em Discrete Mathematics}, 2019.

\bibitem{MOA11:book}
A.~W. Marshall, I.~Olkin, and B.~C. Arnold.
\newblock {\em Inequalities: theory of majorization and its applications}.
\newblock Springer Series in Statistics. Springer, New York, second edition,
  2011.

\bibitem{MR07}
L.~Mattner and B.~Roos.
\newblock A shorter proof of {K}anter's {B}essel function concentration bound.
\newblock {\em Probab. Theory Related Fields}, 139(1-2):191--205, 2007.

\bibitem{NV11}
H.~Nguyen and V.~Vu.
\newblock Optimal inverse {L}ittlewood-{O}fford theorems.
\newblock {\em Adv. Math.}, 226(6):5298--5319, 2011.

\bibitem{NV13}
H.~H. Nguyen and V.~H. Vu.
\newblock Small ball probability, inverse theorems, and applications.
\newblock In {\em Erd\"os centennial}, volume~25 of {\em Bolyai Soc. Math.
  Stud.}, pages 409--463. J\'anos Bolyai Math. Soc., Budapest, 2013.

\bibitem{Pol75}
J.~M. Pollard.
\newblock Addition properties of residue classes.
\newblock {\em J. London Math. Soc. (2)}, 11(2):147--152, 1975.

\bibitem{RV08:1}
M.~Rudelson and R.~Vershynin.
\newblock The {L}ittlewood-{O}fford problem and invertibility of random
  matrices.
\newblock {\em Adv. Math.}, 218(2):600--633, 2008.

\bibitem{Ruz09:1}
I.~Z. Ruzsa.
\newblock Sumsets and entropy.
\newblock {\em Random Structures Algorithms}, 34(1):1--10, 2009.

\bibitem{SW90a}
S.~Shamai and A.D. Wyner.
\newblock A binary analog to the entropy-power inequality.
\newblock {\em IEEE Trans. Inform. Theory}, 36(6):1428--1430, 1990.

\bibitem{Sha48}
C.E. Shannon.
\newblock A mathematical theory of communication.
\newblock {\em Bell System Tech. J.}, 27:379--423, 623--656, 1948.

\bibitem{SDM11:isit}
N.~Sharma, S.~Das, and S.~Muthukrishnan.
\newblock Entropy power inequality for a family of discrete random variables.
\newblock In {\em Proc. IEEE Intl. Symp. Inform. Theory.}, pages 1945--1949.
  St. Petersburg, 2011.

\bibitem{Sta59}
A.J. Stam.
\newblock Some inequalities satisfied by the quantities of information of
  {F}isher and {S}hannon.
\newblock {\em Information and Control}, 2:101--112, 1959.

\bibitem{Tao10}
T.~Tao.
\newblock Sumset and inverse sumset theory for {S}hannon entropy.
\newblock {\em Combin. Probab. Comput.}, 19(4):603--639, 2010.

\bibitem{TV06:book}
T.~Tao and V.~Vu.
\newblock {\em Additive combinatorics}, volume 105 of {\em Cambridge Studies in
  Advanced Mathematics}.
\newblock Cambridge University Press, Cambridge, 2006.

\bibitem{TV09:2}
T.~Tao and V.~H. Vu.
\newblock Inverse {L}ittlewood-{O}fford theorems and the condition number of
  random discrete matrices.
\newblock {\em Ann. of Math. (2)}, 169(2):595--632, 2009.

\bibitem{WM13:isit}
L.~Wang and M.~Madiman.
\newblock A new approach to the entropy power inequality, via rearrangements.
\newblock In {\em Proc. IEEE Intl. Symp. Inform. Theory}, pages 599--603,
  Istanbul, Turkey, July 2013.

\bibitem{WM14}
L.~Wang and M.~Madiman.
\newblock Beyond the entropy power inequality, via rearrangements.
\newblock {\em IEEE Trans. Inform. Theory}, 60(9):5116--5137, September 2014.

\bibitem{WWM14:isit}
L.~Wang, J.~O. Woo, and M.~Madiman.
\newblock A lower bound on the {R{\'e}nyi} entropy of convolutions in the
  integers.
\newblock In {\em Proc. IEEE Intl. Symp. Inform. Theory}, pages 2829--2833.
  Honolulu, Hawaii, July 2014.

\bibitem{WM15:isit}
J.~O. Woo and M.~Madiman.
\newblock A discrete entropy power inequality for uniform distributions.
\newblock In {\em Proc. IEEE Intl. Symp. Inform. Theory}, Hong Kong, China,
  June 2015.

\bibitem{WZ73}
A.~D. Wyner and J.~Ziv.
\newblock A theorem on the entropy of certain binary sequences and
  applications. {I}.
\newblock {\em IEEE Trans. Information Theory}, IT-19:769--772, 1973.

\bibitem{Yu09:1}
Y.~Yu.
\newblock Monotonic convergence in an information-theoretic law of small
  numbers.
\newblock {\em IEEE Trans. Inform. Theory}, 55(12):5412--5422, 2009.

\end{thebibliography}

\end{document}